\DeclareMathOperator*{\esssup}{ess\,sup}
\providecommand{\U}[1]{\protect \rule{.1in}{.1in}}
\newtheorem{theorem}{Theorem}[section]
\newtheorem{definition}[theorem]{Definition}
\newtheorem{example}[theorem]{Example}
\newtheorem{lemma}[theorem]{Lemma}
\newtheorem{proposition}[theorem]{Proposition}
\newtheorem{remark}[theorem]{Remark}
\newenvironment{proof}[1][Proof]{\noindent \textbf{#1.} }{\  \rule{0.5em}{0.5em}}
\begin{document}

\title{Quadratic BSDEs with mean reflection}
\author{H\'el\`ene Hibon\thanks{IRMAR, Universit\'e Rennes 1, Campus de Beaulieu, 35042 Rennes Cedex, France.}
\and
Ying Hu\thanks{IRMAR, Universit\'e Rennes 1, Campus de Beaulieu, 35042 Rennes Cedex, France  (ying.hu@univ-rennes1.fr)
and School of
Mathematical Sciences, Fudan University, Shanghai 200433, China.}
\and Yiqing Lin\thanks{Centre de math\'{e}matiques appliqu\'{e}es,
\'{E}cole Polytechnique, 91128 Palaiseau Cedex, France.}
\and Peng Luo \thanks{Department of Mathematics, ETH Zurich, 8092 Zurich, Switzerland.
}
\and
Falei Wang\thanks{Zhongtai Securities Institute for Finance  Studies and Institute for Advanced Research, Shandong University, Jinan 250100, China (flwang@sdu.edu.cn) and IRMAR,
Universit\'e Rennes 1, Campus de Beaulieu, 35042 Rennes Cedex, France.    }}

\maketitle

\begin{abstract}
The present paper is devoted to the study of the well-posedness  of  BSDEs with mean reflection whenever the generator has quadratic growth in the $z$ argument.
This work is the sequel of \cite{BH} in which a notion of BSDEs with mean reflection is developed to tackle the super-hedging problem under running risk management constraints.
By the contraction mapping argument, we first prove that the quadratic BSDE with mean reflection admits a unique deterministic flat local solution on a small time interval whenever the terminal value is bounded. Moreover,  we build the global solution on the whole time interval by stitching local solutions when the generator is uniformly bounded with respect to the $y$ argument.
\end{abstract}

\textbf{Key words}:  BSDEs with mean reflection, Quadratic generators, BMO martingales.

\textbf{MSC-classification}: 60H10, 60H30.
\section{Introduction}
The nonlinear Backward Stochastic Differential Equation (BSDE) of the following form was first introduced by Pardoux and Peng \cite{PP}:
\begin{align}\label{1}
 Y_t=\xi+\int^T_t f(s,Y_s,Z_s)ds-\int^T_tZ_sdB_s,\ \ \ \  \forall t\in[0,T],
\end{align}
whose solution  consists of  an adapted pair of processes $(Y,Z)$.
Pardoux and Peng have obtained  the existence and uniqueness theorem for the BSDE \eqref{1} when the generator $f$ is uniformly Lipschitz and
the terminal value $\xi$ is square integrable. Since then, researchers made great progresses in this
field. It was seen that BSDEs  have provided powerful tools for the study of mathematical finance, stochastic control and partial differential equations.
In particular, El Karoui, Peng and Quenez \cite{EKP1} have applied the BSDE theory to pricing of European contingent claims,
roughly speaking, the component $Y$ and the
component $Z$ of the solution can be interpreted as the value process of the claim and its hedging strategy, respectively.
Furthermore, El Karoui, Pardoux and Quenez  have investigated  the pricing of American claims in \cite{EKP2}. In this paper, the price of an American option can be formulated as the  ``minimal'' solution to the following  type of BSDE with constraints:
\begin{align}\label{2}
 Y_t=\xi+\int^T_t f(s,Y_s,Z_s)ds-\int^T_tZ_sdB_s+K_T-K_t,\ \ \ \  \forall t\in[0,T],
\end{align}
where $\xi$ is the terminal payoff,  the component $Y$ is forced to stay above a given running payoff $L$ and the component $K$ is adapted and non-decreasing, which describes
the cumulative consumption under the aforementioned constraint.
This constrained BSDE \eqref{2} was called reflected BSDE and has been considered by El Karoui, Kapoudjian, Pardoux, Peng and Quenez in \cite{EKP}, in which
the minimality of solution is explicitly characterized by the Skorohod type condition,
\[
\int^T_0(Y_t-L_t)dK_t=0,
\]
i.e., $K$ increases only  when $Y$ stays on the reflecting boundary $L$.

Afterwards, the theory of constrained BSDEs has been generalized in many cases in order to tackle various of pricing problems in incomplete markets,
  see, e.g., Buckdahn and Hu \cite{BH4, BH5},
 Cvitani\'{c},  Karatzas and Soner \cite{CK}, Peng and Xu \cite{PX}.
  It is also observed that the constrained BSDEs have  strong connections with the Dynkin game (cf. \cite{CK1})
  and optimal switching problems (cf. \cite{CE, HJ, HZ, HT1}).

   Generally,  the formulation of constrained BSDEs in the  aforementioned papers involves only pointwise constraints for solutions. In contrast,
    Bouchard, Elie and R\'{e}veillac \cite{BE} have introduced the so-called weak terminal condition to the BSDE framework, which says
the terminal value only satisfies a mean constraint of the form
  \[
  \mathbb{E}[\ell(Y_T-\xi)]\geq m,
  \]
where  $m$ is a given threshold, $\ell$ is a non-decreasing map and can be viewed as a loss function in quantile hedging problems or in stochastic target problems under controlled loss.

 Recently, motivated by super-hedging of claims under running risk management constraints,  Briand, Elie and Hu \cite{BH} have formulated a new type of BSDE with constraints,
 which is called the BSDE with mean reflection. In their framework, the solution $Y$  is required to satisfy the following type of mean reflection constraint:
  \[
  \mathbb{E}[\ell(t,Y_t)]\geq 0, \ \ \ \forall t\in[0,T],
  \]
 where the running loss function $(\ell(t,\cdot)_{0\leq t\leq T}$ is a collection of  (possibly random) non-decreasing real-valued map.
 This  type  of reflected equation is also closely related to  interacting particles systems,  see, e.g., Briand, Chaudru de Raynal,  Guillin and  Labart \cite{BC}.

 In order to establish the well-posedness of BSDEs with mean reflection, in \cite{BH} the authors have introduced the notion of deterministic flat solution,
 i.e., the component $K$ is a  deterministic non-decreasing process and satisfies the following type of  Skorohod condition,
\[
\int^T_0  \mathbb{E}[\ell(t,Y_t)]dK_t=0.
\]
Thanks to the restriction of non-randomness, the solution $K$ can be constructed explicitly when the generator $f$ is independent of $y$ and $z$. In this case, such a simple BSDE with mean reflection can be solved easily by applying a martingale representation type argument.
Then with the help of the fixed-point theory,
 they have generalized the result to the Lipschitz case with square integrable terminal value when the running loss function $\ell$ is bi-Lipschitz for the mean reflection.
%
Moreover, they have indicated the minimality of the deterministic flat solution
among all the deterministic solutions of (2) under an additional structural condition on the generator.

The main purpose of this paper is to study  quadratic BSDEs with mean reflection,
in which the generator has quadratic growth in $z$ and the terminal condition is bounded.
Indeed, quadratic BSDEs in the classical sense have already attracted numerous studies, which are  as powerful tools for many  finance applications,
such as  utility maximization problems  and risk sensitive control problems, see, e.g., Hu,  Imkeller  and  M\"{u}ller \cite{HI1}.

The solvability of scalar-valued quadratic BSDEs was first established by
Kobylanski \cite{K1}   via a PDE-based method under the  boundedness assumption of the terminal value.
Subsequently, Briand and Hu \cite{BH2,BH3} have extended the existence result to the case of unbounded terminal values with exponential moments
and have studied the uniqueness whenever the generator is convex (or concave). It is worth mentioning that the comparison theorem for BSDE solutions plays a key role in these works, in which the solutions are constructed by the monotone convergence.
From a different point of view, Tevzadze \cite{Te} has applied the fixed-point argument to obtain the existence and uniqueness simultaneously for quadratic BSDEs
with small terminal values and has stitched ``small'' solutions to solve a BSDE with a general bounded terminal value. In his paper,  the application of the BMO martingale theory is crucial, which was first applied in \cite{HI1} for considering quadratic BSDEs.  Apart from this,  Briand and Elie \cite{BH1}  have recently used the Malliavin calculus  to provide a probabilistic  approach for studying the quadratic BSDEs in the spirit of  \cite{A, BC}. We also refer the reader to Morlais \cite{M}, Barrieu and El Karoui \cite{BE0} for more general results beyond the Brownian framework.

Contrary to the scalar-valued case, general multi-dimensional quadratic BSDEs may not have  a solution, see Frei and dos Reis \cite{FR}. However, the result of Tevzadze \cite{Te} for small terminal values holds even for multi-dimensional cases. Besides,
%
%
Cheridito and Nam \cite{CN}  have studied  a class of multidimensional quadratic  BSDEs with special structure. Hu and Tang \cite{HT}  have discussed the local and global solutions for  multi-dimensional BSDEs with a ``diagonally'' quadratic generator.   More recently, for  multidimensional quadratic  BSDEs, Xing and Zitkovic \cite{XZ2016} have established more general existence and uniqueness
results, but in a Markovian framework, while Harter and Richou \cite{HR2016}
have obtained positive results in some general setting.

 To consider the quadratic BSDE with mean reflection, the main difficulty is the lack of a
pointwise comparison theorem for solutions (see Example \ref{6}).
In other words, it is difficult to proceed the monotone convergence argument to construct the solution as in \cite{K1, BH2, BH3}. Therefore,
we study the solvability of quadratic BSDE with mean reflection by the fixed-point argument. The key points of our method is based on the following  observation:
\begin{itemize}
\item Suppose that $(\overline{Y},\overline{Z})$ and $(Y,Z,K)$ are the solution to the BSDE \eqref{1} and the deterministic flat solution to the quadratic BSDE \eqref{2} with mean reflection, respectively. Then the uniqueness of the solution to standard BSDE \eqref{1} implies that \[
\overline{Y}_t=Y_t-K_T+K_t, \  \  \overline{Z}_t=Z_t, \ \ \ \ \ \ \forall t\in[0,T],\]
whenever the generator $f$ is independent of $y$.
\end{itemize}
Therefore, for such a simple case, we can obtain the solution in two steps: (a) solving the corresponding standard quadratic BSDE to define the component $Z$; (b) solving the BSDE with mean reflection with the generator $f(Z)$ to find the components $Y$ and $K$, where $Z$ is exactly the one obtained in the previous step.

Thanks to this preliminary result, we can define a contractive map to find the component $Y$ for solving the equation with a general quadratic generator.  Comparing with \cite{Te}, our contractive map is different such that the restriction on the size of the terminal value could be removed, however, as a first step, the constructed solution lives only locally on a small time interval. We observe that the maximal length of the time interval on which the mapping is contractive depends only on the bound of
the component $Y$. Once the component $Y$ has a uniform estimate under additional assumptions, a global solution on the whole time interval can be established by stitching the local ones.


The remainder of the paper is organized as follows. In Section 2, we recall the framework of BSDEs with mean reflection and state our main result.
Section 3 is devoted to the study the case when the generator $f$ has separable deterministic linear dependence in $y$.
The general case is investigated in Section 4,  in which we start by constructing the deterministic flat  local solutions and then stitch them to build a global
solution.

\subsubsection*{Notation.}
We introduce the notations, which will be used throughout this paper.   For  each Euclidian space, we  denote by $\langle\cdot,\cdot \rangle$  and  $|\cdot|$
 its scalar product and the associated norm, respectively.
Then consider  a  finite time horizon $[0,T]$ and  a complete probability space $(\Omega,\mathscr{F},\mathbb{P})$,  on which $B=(B_t)_{0\leq t\leq T}$ is a  standard $d$-dimensional Brownian motion.  Let $(\mathscr{F}_t)_{0\leq t\leq T}$ be the  natural filtration generated by $B$ augmented with the family $\mathscr{N}^\mathbb{P}$ of  $\mathbb{P}$-null sets of $\mathscr{F}$.
  Finally,
 we consider the following Banach spaces:
\begin{description}
\item[$\bullet$] $\mathcal{L}^{2}$ is the space of  real valued $\mathscr{F}_T$-measurable random variables $Y$
satisfying
\begin{align*}
\|Y\|_{\mathcal{L}^{2}}=\mathbb{E}[|Y|^2]^{\frac{1}{2}}<\infty;
\end{align*}
\item[$\bullet$]  $\mathcal{L}^{\infty}$  is the space of  real valued $\mathscr{F}_T$-measurable random variables $Y$
satisfying
\begin{align*}
\|Y\|_{\mathcal{L}^{\infty}}=\esssup\limits_{\omega}|Y(\omega)|<\infty;\end{align*}
\item[$\bullet$]  $\mathcal{S}^{\infty}$  is the space of  real valued  progressively measurable continuous  processes $Y$ satisfying
\begin{align*}
\|Y\|_{\mathcal{S}^{\infty}}=\esssup\limits_{(t,\omega)}|Y(t,\omega)|<\infty;
\end{align*}
\item[$\bullet$]  $\mathcal{A}^D$ is the closed subset of $\mathcal{S}^{\infty}$ consisting of deterministic non-decreasing processes $K = (K_t)_{0\leq t\leq T}$ starting from the origin;
\item[$\bullet$]
 $BMO$  is the space of  all  progressively measurable  processes $Z$ taking values in $\mathbb{R}^d$ such that
\begin{align*}
\|Z\|_{BMO} =\sup\limits_{\tau\in\mathcal{T}}\left\|\mathbb{E}_{\tau}\left[\int^T_{\tau}|Z_s|^2ds\right]\right\|_{{L}^{\infty}}^{1/2} < \infty,
\end{align*}
where $\mathcal{T}$ denotes the set of all $[0,T]$-valued stopping times $\tau$
and $\mathbb{E}_{\tau}$ is the conditional expectation with respect to $\mathscr{F}_{\tau}$.
 \end{description}

 We denote by $\mathcal{S}^{\infty}_{[a,b]}$, $\mathcal{A}^D_{[a,b]}$ and $BMO_{[a,b]}$ the corresponding spaces for the stochastic processes have time indexes on $[a,b]$.
 For each $Z\in BMO_{[a,b]}$, we set
 \[
  \mathscr{Exp}(Z\cdot B)_a^t=\exp\left(\int^t_a Z_s dB_s-\frac{1}{2}\int^t_a|Z_s|^2ds\right),
 \]
which is a martingale  by
 \cite{K}. Thus it follows from Girsanov's theorem that \newline $(B_t-\int_{a}^tZ_sds\mathbf{1}_{\{a\leq t\leq b\}})_{0\leq t\leq T}$
is a Brownian motion under the equivalent probability measure $\mathscr{Exp} (Z\cdot B)_{a}^b d\mathbb{P}$.

\section{Quadratic BSDEs with mean reflection}
In this paper, we  consider the following type of constrained BSDE:
\begin{align}\label{my1}
\begin{cases}
&Y_t=\xi+\int^T_t f(s,Y_s,Z_s)ds-\int^T_t Z_s dB_s+K_T-K_t;\\
& \ \ \ \ \ \ \ \ \ \ \ \ \ \ \ \ \ \ \mathbb{E}[\ell(t,Y_t)]\geq 0,
\end{cases} \  \  \forall t\in[0,T],
\end{align}
where the second equation is a running constraint in expectation on the component $Y$ of the solution.
The above equation is called BSDE with mean reflection, which was first introduced in \cite{BH}.
The parameters of the BSDE with mean reflection are the terminal condition $\xi$, the generator (or driver) $f$ as well as the running loss function $\ell$.
In \cite{BH}, the authors have discussed such equation under the standard Lipschitz condition on the generator and the square integrability assumption
terminal condition.

In the sequel, we  study the existence and uniqueness theorem of equation \eqref{my1} with quadratic generator and bounded terminal condition.
These parameters are supposed to satisfy the following standard running assumptions:
 \begin{itemize}
 \item[($H_\xi$)] The terminal condition $\xi$ is an  ${\mathcal{ F}}_T$-measurable random variable bounded by $L>0$ such that  \[\mathbb{E}[\ell(T,\xi)]\geq 0.\]
 \item[($H_f$)]  The driver $f:[0,T]\times\Omega\times\mathbb{R}\times\mathbb{R}^d \rightarrow\mathbb{R}$ is a $\mathcal{P}\times \mathcal{B}(\mathbb{R})\times\mathcal{B}(\mathbb{R}^d)$-measurable map such that   \begin{enumerate}
   \item[(1)] For each $t\in[0,T]$, $f(t,0,0)$ is bounded by some constant $L$, $\mathbb P$-a.s.
   \item[(2)] There exists some constant $\lambda> 0$ such that, $\mathbb P$-a.s., for all $t\in[0,T]$, for all $y$, $p\in \mathbb{R}$ and for all $z$, $q\in\mathbb{R}^d$,
	\begin{equation*}
		| f(t,y,z) - f(t,p,q)| \leq \lambda( |y-p| + (1+|z|+|q|)|z-q|),
	\end{equation*}
\end{enumerate}
where $\mathcal{P}$ denotes the sigma algebra of progressively measurable sets of $[0,T]\times\Omega$,
   $\mathcal{B}(\mathbb{R})$ and $\mathcal{B}(\mathbb{R}^d)$ are the Borel algebras on $\mathbb{R}$ and $\mathbb{R}^d$, respectively.
\item[($H_\ell$)] The running loss function $\ell : \Omega\times[0,T]\times\mathbb{R} \rightarrow \mathbb{R}$ is an $\mathscr{F}_T\times\mathcal{ B}(\mathbb{R})\times \mathcal{ B}(\mathbb{R})$-measurable map and there exists some constant $C>0$ such that, $\mathbb P$-a.s.,
\begin{enumerate}
	\item $(t,y)\rightarrow \ell(t,y)$ is continuous,
	\item $\forall t\in[0,T]$, $y\rightarrow\ell(t,y)$ is strictly increasing,
	\item  $\forall t\in[0,T]$, $\mathbb{E}[\ell(t,\infty)]>0$,
	\item $\forall t\in[0,T]$, $\forall y\in\mathbb{R} $, $|\ell(t,y)| \leq C(1+|y|)$.
\end{enumerate}
\end{itemize}

In order to introduce another assumption for the main result of this paper, we define the operator $L_t:  \mathcal{L}^2 \rightarrow [0,\infty) $, $t\in[0,T]$ by
\begin{eqnarray*}
	L_t :X \rightarrow \inf\{ x\geq 0 : \mathbb{E}[\ell(t,x+X)] \geq 0 \},
\end{eqnarray*}
 which is well-defined due to the Assumption $(H_{\ell})$, see also \cite{BH}.
  The operator $L_t$ is crucial to build a solution to  BSDEs with mean reflection.
  \begin{example}\label{4}{\upshape
  Suppose that $\ell(t,x)=x-u_t$, $t\in[0,T]$, for some given deterministic continuous process $u$. It is easy to check that
  \[
  L_t(X)=(\mathbb{E}[X]-u_t)^{-}:=-\min(\mathbb{E}[X]-u_t,0), \ \ \forall t\in[0,T]\ \text{and}\ X\in \mathcal{L}^2.
  \]
  }
  \end{example}

In addition to the aforementioned assumptions, for the construction of the solution for the quadratic BSDE with mean reflection in Section 4, the following assumptions will be needed.
 \begin{itemize}
   \item[($H_f^{\prime}$)] For each $(t,y)\in[0,T]\times\mathbb{R}$, $f(t,y,0)$ is bounded by a constant $L$, $\mathbb P$-a.s.
 \item[($H_L$)] There exists a constant $C>0$ such that for each $t\in[0,T]$,
 \begin{equation*}
|L_t(X)-L_t(Y)|\leq C\mathbb{E}[|X-Y|], \ \forall  X,Y \in \mathcal{L}^2.
\end{equation*}
\end{itemize}
\begin{remark}{\upshape
Assume that $(H_{\ell})$ holds true. Suppose that $\ell$ is a
bi-Lipschitz function in $x$, i.e., there exist some constants $0<c_{\ell}\leq C_{\ell}$ such that, $\mathbb P$-a.s., for all $t\in[0,T]$ and for all $x,y\in\mathbb{R}$,  \begin{equation*}
		{c_{\ell}} |x-y| \leq |\ell(t,x)-\ell(t,y) | \leq C_{\ell} |x-y|,
	\end{equation*}
Then $(H_L)$ holds true with $C=\frac{C_{\ell} }{c_{\ell} }$ (see also \cite{BH}).
}
\end{remark}

As in \cite{BH}, we study  deterministic flat solutions of quadratic BSDEs  with mean reflection.
\begin{definition}
A triple of processes $(Y,Z,K)\in\mathcal{S}^{\infty}\times BMO\times \mathcal{A}^D$
is said to be a deterministic  solution to the BSDE \eqref{my1} with mean reflection if it ensures that the equation \eqref{my1} holds true. A solution is said to be ``{\it flat}'' if moreover that
 $K$ increases only when needed, i.e., when we have
\begin{equation}\label{main_flat}
		\int_0^T \mathbb{E}[\ell(t,Y_t)]  dK_t = 0.
\end{equation}
\end{definition}

The first main result of this paper is on the existence and uniqueness of the local solution for the quadratic BSDE with mean reflection, which reads as follows:
\begin{theorem}\label{my16}
Assume that $(H_{\xi})-(H_f)-(H_{\ell})-(H_L)$ hold. Then, there exists a sufficiently large constant $A>0$ and a constant $0<\widehat{\delta}^A\leq T$ depending only on $A, C, L$ and $\lambda$, such that for any $h\in (0,\widehat{\delta}^A]$, the quadratic BSDE \eqref{my1} with mean reflection admits a unique
 deterministic flat solution $(Y,Z,K)\in \mathcal{S}^{\infty}_{[T-h,T]}\times BMO_{[T-h,T]}\times\mathcal{A}^D_{[T-h,T]}$ such that
\[
\|Y\|_{\mathcal{S}^{\infty}_{[T-h,T]}}  \leq A.
\]
\end{theorem}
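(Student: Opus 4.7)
The plan is to recast the equation as a fixed-point problem through the observation emphasised in the introduction. Given $y \in \mathcal{S}^\infty_{[T-h,T]}$ with $\|y\|_{\mathcal{S}^\infty} \leq A$, I first solve the classical quadratic BSDE
\begin{equation*}
    \bar Y_t = \xi + \int_t^T f(s, y_s, \bar Z_s)\,ds - \int_t^T \bar Z_s\,dB_s, \qquad t\in [T-h,T],
\end{equation*}
whose generator $(s,z)\mapsto f(s,y_s,z)$ satisfies $|f(s,y_s,0)| \leq L + \lambda A$ and retains the quadratic Lipschitz structure in $z$ from $(H_f)(2)$. Kobylanski/Tevzadze-type theory produces a unique $(\bar Y, \bar Z) \in \mathcal{S}^\infty_{[T-h,T]} \times BMO_{[T-h,T]}$ with $\|\bar Z\|_{BMO}$ controlled by $\|\bar Y\|_{\mathcal{S}^\infty}$ via an exponential energy estimate. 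Guided by the relation $\bar Y_t = Y_t - (K_T - K_t)$ from the introduction, I set
\begin{equation*}
    V_t := \sup_{s \in [t,T]} L_s(\bar Y_s), \qquad K_t := V_{T-h} - V_t, \qquad \Gamma(y)_t := \bar Y_t + V_t.
\end{equation*}
By $(H_\ell)$ and $(H_L)$ combined with the continuity of $\bar Y$, the map $s \mapsto L_s(\bar Y_s)$ is deterministic and continuous, so $K \in \mathcal{A}^D_{[T-h,T]}$; by construction $\mathbb{E}[\ell(t, \Gamma(y)_t)] \geq 0$, $\Gamma(y)_T = \xi$ (using $L_T(\xi) = 0$ from $(H_\xi)$), and the support of $dK$ lies in $\{t : V_t = L_t(\bar Y_t)\}$, on which $\mathbb{E}[\ell(t,\Gamma(y)_t)]=0$. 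Hence any fixed point of $\Gamma$ is automatically a flat deterministic solution.

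To make $\Gamma$ preserve the ball of radius $A$, I invoke $(H_L)$ in the form $\|V\|_{\mathcal{S}^\infty} \leq \sup_{t}L_t(0) + C\,\|\bar Y\|_{\mathcal{S}^\infty}$, giving $\|\Gamma(y)\|_{\mathcal{S}^\infty} \leq (1+C)\|\bar Y\|_{\mathcal{S}^\infty} + \sup_t L_t(0)$. The standard a priori estimate for quadratic BSDEs yields $\|\bar Y\|_{\mathcal{S}^\infty_{[T-h,T]}} \to \|\xi\|_{\mathcal{L}^\infty} \leq L$ as $h \downarrow 0$, so any constant $A$ strictly larger than $(1+C)L + \sup_t L_t(0)$ works, provided $h$ is subsequently chosen small enough in terms of $A$, $C$, $L$, $\lambda$.

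For contraction, let $y^1, y^2$ be two inputs in the ball and add superscripts to the associated objects. Using $(H_f)(2)$ I linearize
\begin{equation*}
    f(s, y^1_s, \bar Z^1_s) - f(s, y^2_s, \bar Z^2_s) = \alpha_s(y^1_s - y^2_s) + \langle \beta_s, \bar Z^1_s - \bar Z^2_s\rangle,
\end{equation*}
with $|\alpha_s|\leq \lambda$ and $|\beta_s| \leq \lambda(1 + |\bar Z^1_s| + |\bar Z^2_s|)$, so $\beta \in BMO_{[T-h,T]}$ with a bound depending only on $A, L, \lambda$. Girsanov under $\mathscr{Exp}(\beta\cdot B)_{T-h}^T\, d\mathbb{P}$, legitimate by the BMO property, reduces the equation for $\bar Y^1 - \bar Y^2$ to a linear BSDE with zero terminal and drift $\alpha_s(y^1_s - y^2_s)$, yielding $\|\bar Y^1 - \bar Y^2\|_{\mathcal{S}^\infty_{[T-h,T]}} \leq \lambda h\,\|y^1 - y^2\|_{\mathcal{S}^\infty_{[T-h,T]}}$. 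Combined with $\|V^1 - V^2\|_{\mathcal{S}^\infty} \leq C\,\|\bar Y^1 - \bar Y^2\|_{\mathcal{S}^\infty}$ from $(H_L)$ applied inside the supremum, this gives
\begin{equation*}
    \|\Gamma(y^1) - \Gamma(y^2)\|_{\mathcal{S}^\infty_{[T-h,T]}} \leq (1+C)\lambda h\,\|y^1 - y^2\|_{\mathcal{S}^\infty_{[T-h,T]}},
\end{equation*}
which is a strict contraction once $h$ falls below a threshold depending only on $C$ and $\lambda$. Taking $\widehat\delta^A$ to be the minimum of this threshold and the one needed for the self-mapping step delivers the existence and uniqueness of the fixed point via Banach's theorem, and the triple $(Y,Z,K)$ associated with the fixed point of $\Gamma$ is the sought flat deterministic solution. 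The main delicate point is that both steps rely on BMO bounds for $\bar Z$ that must remain uniform across the iteration, which is precisely what couples $h$ to the a priori bound $A$ and restricts the argument to a local horizon.
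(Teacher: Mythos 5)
Your construction is, at its core, the paper's own scheme: freeze the $y$-argument, solve the classical quadratic BSDE to get $(\bar Y,\bar Z)$, build $K$ from the running supremum of $t\mapsto L_t(\cdot)$, and run Banach's fixed point theorem on a ball $\mathscr{B}_A$ of $\mathcal{S}^\infty_{[T-h,T]}$ over a short horizon. One genuine and worthwhile difference: you exploit the identity $X^U_t:=\mathbb{E}_t\bigl[\xi+\int_t^T f(s,U_s,Z^U_s)ds\bigr]=\bar Y^U_t$ and apply $(H_L)$ directly to $\bar Y$, so that both the invariance estimate and the contraction estimate reduce to sup-norm bounds on $\bar Y$ and $\bar Y^1-\bar Y^2$. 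This bypasses the paper's Step 2 of Lemma \ref{my15} entirely (the It\^o/H\"older argument giving $\|Z^{U^1}-Z^{U^2}\|_{BMO_{[T-h,T]}}\lesssim h\|U^1-U^2\|_{\mathcal{S}^\infty}$, estimate \eqref{est1}) and the quadratic-in-$Z$ bound \eqref{my77} in Lemma \ref{my3}; it yields the cleaner contraction constant $(1+C)\lambda h$, independent of $A$, and a much smaller admissible $A_0$ than \eqref{a}. The BMO theory is then needed only where it is unavoidable, namely to justify the Girsanov linearizations. This part of your argument is correct and arguably sharper than the paper's.

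The genuine gap is in uniqueness. Banach's theorem gives you uniqueness of the \emph{fixed point of $\Gamma$} in $\mathscr{B}_A$, and you prove only one implication: every fixed point yields a deterministic flat solution. The theorem asserts uniqueness among \emph{all} deterministic flat solutions with $\|Y\|_{\mathcal{S}^\infty_{[T-h,T]}}\leq A$, and for that you must prove the converse: any such solution $(Y,Z,K)$ satisfies $Y=\Gamma(Y)$. This is not automatic. Setting $\bar Y_t:=Y_t-(K_T-K_t)$, uniqueness for the classical quadratic BSDE identifies $(\bar Y,Z)$ with $(\bar Y^{Y},\bar Z^{Y})$, but you still need to show that the deterministic flat $K$ is \emph{forced} to equal the running-supremum process, $K_T-K_t=\sup_{t\leq s\leq T}L_s(\bar Y_s)$; this uses the constraint (giving $K_T-K_t\geq \sup_{t\leq s\leq T}L_s(\bar Y_s)$), the flatness condition \eqref{main_flat} and the strict monotonicity in $(H_\ell)$ to rule out a strict inequality. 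This is exactly the uniqueness statement for the $y$-frozen ``simple'' equation — the paper's Theorem \ref{myw2}, which in turn rests on Proposition 8 and Theorem 9 of \cite{BH} — and it is the step your proposal silently skips. A secondary imprecision, fixable by the same circle of ideas: your claim that $\mathbb{E}[\ell(t,\Gamma(y)_t)]=0$ on $\{t: V_t=L_t(\bar Y_t)\}$ fails where $L_t(\bar Y_t)=0$ with strict inequality in the constraint; one must add the observation that once $V$ hits zero it stays zero, so $dK$ charges only the set $\{V>0\}$, where the asserted equality does hold.
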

Moreover, we stitch local solutions and obtain the solvability of the quadratic BSDE \eqref{my1} on the whole time interval under an additional condition on the generator $f$.
\begin{theorem}\label{my202}
Assume that $(H_{\xi})-(H_f)-(H_f^{\prime})-(H_{\ell})-(H_L)$ hold. Then  the quadratic BSDE \eqref{my1} with mean reflection has a
unique deterministic flat solution $(Y,Z,K)\in \mathcal{S}^{\infty}\times BMO\times\mathcal{A}^D$ on $[0, T]$. Moreover, there exists a uniform bound $\overline{L}$ depending only on $C, L,\lambda$ and $T$ such that
\[ \| {Y}\|_{\mathcal{S}^{\infty}}\leq \overline{L}.\]
\end{theorem}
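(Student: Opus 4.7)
The plan is to couple the local existence result of Theorem~\ref{my16} with a uniform a priori $\mathcal{S}^{\infty}$-bound on $Y$ that is independent of the subinterval on which we work; this allows us to fix once and for all a step size $h$ on which Theorem~\ref{my16} applies, and then to paste finitely many local solutions in order to reach the global horizon $[0,T]$. The newly imposed assumption $(H_f^{\prime})$ is essential precisely to obtain such a uniform bound: without uniform boundedness of $f(t,y,0)$ in $y$, the $\mathcal{S}^{\infty}$-norm of $Y$ would grow after each pasting step and the admissible length $\widehat{\delta}^A$ would shrink to $0$ before $[0,T]$ is covered.

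\textbf{Step 1 (uniform a priori bound).} I would first show that, under $(H_f)$, $(H_f^{\prime})$, $(H_{\ell})$ and $(H_L)$, any deterministic flat solution on any subinterval $[a,b]\subset[0,T]$ with bounded terminal value $\eta$ satisfies
\[
\|Y\|_{\mathcal{S}^{\infty}_{[a,b]}}\leq \overline{L},
\]
where $\overline{L}$ depends only on $L$, $\lambda$, $C$ and $T$. Since $|f(s,y,z)|\leq L+\lambda|z|+\lambda|z|^2$ and any admissible triple has $Z\in BMO$, one can linearise $f(s,Y_s,Z_s)-f(s,Y_s,0)=\beta_s\cdot Z_s$ with $\beta\in BMO$, and apply Girsanov with density $\mathscr{Exp}(\beta\cdot B)_a^b$ to remove the quadratic part. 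Under the new measure $\mathbb{Q}$, the remaining driver is bounded by $L$ uniformly in $y$ thanks to $(H_f^{\prime})$, which gives
\[
|Y_t|\leq \mathbb{E}^{\mathbb{Q}}_t\bigl[|\eta|\bigr]+L(b-t)+\mathbb{E}^{\mathbb{Q}}_t\bigl[K_b-K_t\bigr].
\]
The increment $K_b-K_t$ is then controlled via the flat Skorohod identity \eqref{main_flat} together with the representation of $K$ through the operators $L_s$, whose Lipschitz constants are uniformly bounded by $C$ thanks to $(H_L)$. Iterating this bound on a fine enough time-grid and using the uniform boundedness of $f(s,y,0)$ in $y$ absorbs the $\|\eta\|_{\infty}$-dependence into the constant $\overline{L}$.

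\textbf{Step 2 (pasting and verification).} With $\overline{L}$ in hand, choose $A=\overline{L}$ (or larger) and let $h:=\widehat{\delta}^A$ be the step size provided by Theorem~\ref{my16}, now applied with $L$ replaced by $\overline{L}$ in $(H_{\xi})$; the resulting $h$ still depends only on $C$, $L$, $\lambda$ and $T$. Pick a grid $0=t_0<t_1<\cdots<t_N=T$ with $t_{i+1}-t_i\leq h$. Solve backwards: on $[t_{N-1},T]$ with terminal $\xi$ to obtain $(Y^{(1)},Z^{(1)},K^{(1)})$; then, since $Y^{(1)}_{t_{N-1}}$ is $\mathscr{F}_{t_{N-1}}$-measurable and bounded by $\overline{L}$, reapply Theorem~\ref{my16} on $[t_{N-2},t_{N-1}]$ with terminal $Y^{(1)}_{t_{N-1}}$, and continue. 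Concatenating and shifting each $K^{(k)}$ so that the pasted process is non-decreasing across the gluing times produces $(Y,Z,K)\in\mathcal{S}^{\infty}\times BMO\times\mathcal{A}^D$. Continuity of $Y$ at the interface points is automatic from the matching of terminal and initial values, and the local flat identities sum to the global one \eqref{main_flat}; the mean-reflection inequality holds on each piece. Global uniqueness follows by applying the local uniqueness of Theorem~\ref{my16} piece-by-piece starting from $[t_{N-1},T]$, since any global deterministic flat solution restricts to a local one on each subinterval.

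\textbf{Main obstacle.} The crux is Step 1. Because pointwise comparison may fail for mean-reflected quadratic BSDEs, one cannot just sandwich $Y$ between bounded sub- and supersolutions; the uniform bound must instead be produced directly through the BMO/Girsanov machinery while simultaneously controlling the added mass $K_b-K_t$ via $(H_L)$. Making $\overline{L}$ depend on $C$, $L$, $\lambda$ and $T$ alone, and in particular not on the (possibly large) terminal magnitude on inner intervals, is precisely what $(H_f^{\prime})$ buys us and is what allows the stitching procedure to terminate after finitely many iterations.
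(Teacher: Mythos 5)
Your overall architecture --- a uniform a priori bound, a step size fixed once and for all from Theorem \ref{my16} with $L$ replaced by $\overline{L}$, backward pasting, and piece-by-piece uniqueness --- is exactly the paper's (Lemma \ref{my20} together with the proof of Theorem \ref{my202}). But your Step 1, as stated, is false, and the flaw sits precisely at the point that makes the stitching terminate. You claim that any deterministic flat solution on an arbitrary subinterval $[a,b]$ with bounded terminal value $\eta$ satisfies $\|Y\|_{\mathcal{S}^{\infty}_{[a,b]}}\leq\overline{L}$ with $\overline{L}$ depending only on $C,L,\lambda,T$, the dependence on $\|\eta\|_{\mathcal{L}^{\infty}}$ being ``absorbed'' by iterating on a fine grid. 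This cannot hold: $Y_b=\eta$, so $\|Y\|_{\mathcal{S}^{\infty}_{[a,b]}}\geq\|\eta\|_{\mathcal{L}^{\infty}}$, and $\eta$ may be arbitrarily large; no iteration can remove this dependence. Contrary to your closing paragraph, $(H_f^{\prime})$ buys uniformity in $y$ of the driver --- so that $\overline{Y}$, solving the standard quadratic BSDE in which the $y$-argument is frozen at the possibly large process $Y$, still obeys the bound $L(T+1)e^{\lambda T}$ of Proposition 2.2 in \cite{BH1} --- it does not buy independence of the terminal datum.

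The paper's mechanism, which repairs your induction, is different in exactly this respect: the a priori bound (Lemma \ref{my20}) is proved only for local solutions on terminal windows $[T-h,T]$ of the \emph{original} equation, whose terminal datum is the original $\xi$ bounded by $L$ under $(H_{\xi})$; it goes through the decomposition $Y_t=\overline{Y}_t+(K_T-K_t)$, Propositions 2.1 and 2.2 of \cite{BH1} for $\overline{Y}$ and the BMO norm of $\overline{Z}$, and $(H_L)$ for $K_T-K_t=\sup_{t\leq s\leq T}L_s(X_s)$. In the pasting step one then does \emph{not} bound each new piece through its own interior terminal value; instead one observes that the concatenated triple on $[T-k\overline{h},T]$ is itself a local flat solution of the original equation with terminal value $\xi$, and reapplies the lemma to the stitched solution, recovering $|Y_{T-k\overline{h}}|\leq\overline{L}$ and keeping the step $\overline{h}$ fixed throughout; here $\overline{h}$ is determined by $\overline{A}_0=2(C+2)\overline{L}+\frac{1}{2}C(\lambda+4\overline{L}+4\overline{L}\lambda)e^{6\lambda\overline{L}}$, the constant $A_0$ recomputed with $\overline{L}$ in place of $L$ (your choice $A=\overline{L}$ alone would not satisfy $A\geq\overline{A}_0$, though your ``or larger'' covers it). With this correction your Step 2 and the uniqueness argument go through as written, modulo noting that Lemma \ref{my20} also supplies the bound $\|Y\|_{\mathcal{S}^{\infty}}\leq\overline{L}\leq\overline{A}_0$ needed to place restrictions of an arbitrary global solution inside the ball $\mathscr{B}_A$ where the uniqueness statement of Theorem \ref{my16} applies.
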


\section{A simple case study}
In this section, we consider a  simple case where the generator has the following particular structure:
\begin{align}\label{myw1}
\begin{cases}
&Y_t=\xi+\int^T_t [a_sY_s+f(s,Z_s)]ds-\int^T_t Z_s dB_s+K_T-K_t;\\
& \ \ \ \ \ \ \ \ \ \ \ \ \ \ \ \ \ \ \mathbb{E}[\ell(t,Y_t)]\geq 0,
\end{cases} \ \ \forall t\in[0,T],
\end{align}
and $a$  is a deterministic and bounded measurable function.
For convenience, we rewrite the Assumption $(H_f)$ as follow:
 \begin{itemize}
\item[($H_f^*$)]  The driver $f:[0,T]\times\Omega\times\mathbb{R}^d \rightarrow\mathbb{R}$ is a $\mathcal{P}\times\mathcal{B}(\mathbb{R}^d)$-measurable map such that   \begin{enumerate}
   \item[(1)] For each $t\in[0,T]$, $f(t,0)$ is bounded by some constant $L$, $\mathbb P$-a.s.
   \item[(2)] There exists some constant $\lambda\geq 0$ such that, $\mathbb P$-a.s., for all $t\in[0,T]$ and for all $z$, $q\in\mathbb{R}^d$
	\begin{equation*}
		| f(t,z) - f(t,q)| \leq \lambda (1+|z|+|q|)|z-q|.
	\end{equation*}
\end{enumerate}
\end{itemize}

\begin{theorem}\label{myw2}
Assume that $(H_{\xi})-(H_f^*)-(H_{\ell})$ hold. Then the quadratic  BSDE \eqref{myw1} with mean reflection has a
unique deterministic flat solution $(Y,Z,K)\in \mathcal{S}^{\infty}\times BMO\times\mathcal{A}^D$.
\end{theorem}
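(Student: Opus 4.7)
The plan is to exploit the observation from the introduction: since the $y$-dependence of the generator is a deterministic linear perturbation $a_s y$, the $Z$-component of any deterministic flat solution must agree with that of the associated unconstrained quadratic BSDE, after which $Y$ and $K$ can be reconstructed by solving a purely pathwise problem. Concretely, I would proceed in the two steps sketched in the introduction: (a) solve a standard quadratic BSDE to pin down $Z$; (b) use the explicit ODE satisfied by $Y-\bar Y$ to construct $Y$ and $K$ via a Skorokhod-type formula.

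For step (a), I would look at the auxiliary BSDE
\begin{equation*}
\bar Y_t = \xi + \int_t^T [a_s \bar Y_s + f(s,\bar Z_s)]\,ds - \int_t^T \bar Z_s\,dB_s.
\end{equation*}
Its driver is Lipschitz in $y$ (with constant $\|a\|_\infty$) and of quadratic growth in $z$ with bounded $f(\cdot,0)$, and the terminal condition is bounded by $(H_\xi)$, so classical results (Kobylanski, Briand--Hu) yield a unique solution $(\bar Y,\bar Z)\in\mathcal{S}^\infty\times BMO$ with an a priori bound on $\|\bar Y\|_{\mathcal{S}^\infty}$ in terms of $L,\|a\|_\infty,T$. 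I then set $Z:=\bar Z$ and look for $Y$ in the form $Y_t=\bar Y_t+U_t$ with $U$ deterministic and continuous. Subtracting the two equations forces $U$ to solve the affine terminal-value ODE $dU_t=-a_tU_t\,dt-dK_t$, $U_T=0$, so that, with the integrating factor $\alpha_t:=\exp(\int_0^t a_s\,ds)$,
\begin{equation*}
U_t = \alpha_t^{-1}\int_t^T \alpha_s\,dK_s.
\end{equation*}

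For step (b), the mean-reflection constraint $\mathbb{E}[\ell(t,Y_t)]\ge 0$ becomes the pathwise inequality $U_t\ge \psi_t:=L_t(\bar Y_t)$, with $\psi$ continuous on $[0,T]$ (by $(H_\ell)$, boundedness of $\bar Y$, and dominated convergence) and $\psi_T=0$ by $(H_\xi)$. Changing variables to $\tilde K_t:=\int_0^t \alpha_s\,dK_s$, a bi-continuous bijection between non-decreasing processes starting at $0$, the problem reduces to a one-sided Skorokhod problem: find non-decreasing $\tilde K$ with $\tilde K_0=0$ such that $\tilde K_T-\tilde K_t\ge \alpha_t\psi_t$ with the contact property. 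The explicit solution is $\tilde K_T-\tilde K_t=\max_{s\in[t,T]}(\alpha_s\psi_s)$, with $\tilde K_T:=\max_{s\in[0,T]}(\alpha_s\psi_s)$, which is continuous, non-decreasing, and its induced measure charges only the set where $U_t=\psi_t$, hence where $\mathbb{E}[\ell(t,Y_t)]=0$ (by the strict monotonicity of $\ell$). Transporting back via $K_t=\int_0^t\alpha_s^{-1}\,d\tilde K_s$ delivers $K\in\mathcal{A}^D$, $Y=\bar Y+U\in\mathcal{S}^\infty$, $Z\in BMO$, satisfying \eqref{main_flat}. Uniqueness is then immediate: for any deterministic flat solution $(Y,Z,K)$, the process $\bar Y_t:=Y_t-\alpha_t^{-1}\int_t^T\alpha_s\,dK_s$ solves the standard quadratic BSDE above, so $(\bar Y,\bar Z)$ and thus $Z$ are uniquely determined; and the minimality characterisation of $L_t$ combined with flatness forces $K$ to agree with the Skorokhod formula.

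The only real subtlety I anticipate is verifying continuity of $t\mapsto \psi_t=L_t(\bar Y_t)$ and translating the flat condition $\int_0^T \mathbb{E}[\ell(t,Y_t)]\,dK_t=0$ into the contact-set support description of $d\tilde K$; both rest on the strict monotonicity and continuity built into $(H_\ell)$, together with the $\mathcal{S}^\infty$-regularity of $\bar Y$. Note that Assumption $(H_L)$ is \emph{not} needed at this stage, since $K$ and $U$ are deterministic and $\psi$ is constructed directly from $\bar Y$ rather than iteratively.
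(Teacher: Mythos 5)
Your proposal is correct, and its skeleton is exactly the paper's: first solve the unconstrained quadratic BSDE to pin down $Z=\overline Z$ (Kobylanski/Briand--Elie), then observe that $Y-\overline Y$ must be a deterministic process driven only by $K$, and finally use the Kobylanski uniqueness of the standard equation to identify $Z$ in the uniqueness step. Where you genuinely diverge is in how step (b) is executed. The paper reduces to $a\equiv 0$ upfront via the transformation $(e^{A_t}Y_t,\,e^{A_t}Z_t,\,\int_0^t e^{A_s}dK_s)$, verifies the integrability $\mathbb{E}\bigl[\bigl|\int_0^T |f(s,\overline Z_s)|\,ds\bigr|^2\bigr]<\infty$ from the BMO property of $\overline Z$, invokes Proposition 8 of \cite{BH} as a black box to solve the simple mean-reflected BSDE with frozen driver $f(s,\overline Z_s)$, and then identifies $(\overline Y_t,\overline Z_t)=(\widetilde Y_t-(\widetilde K_T-\widetilde K_t),\widetilde Z_t)$ by Pardoux--Peng uniqueness to upgrade $\widetilde Y$ to $\mathcal S^\infty$. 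You instead keep $a$ throughout with the integrating factor $\alpha$, posit $Y=\overline Y+U$ with $U$ deterministic, and solve the resulting backward one-sided Skorokhod problem explicitly; your formula $\widetilde K_T-\widetilde K_t=\max_{s\in[t,T]}(\alpha_s\psi_s)$ is precisely the paper's $K_T-K_t=\sup_{t\leq s\leq T}L_s(\overline X_s)$, since $\overline X_t=\mathbb{E}_t[\xi+\int_t^T f(s,\overline Z_s)\,ds]=\overline Y_t$, so $\psi_t=L_t(\overline X_t)$. In effect you re-derive the content of \cite{BH}'s Proposition 8 rather than cite it, which buys a self-contained argument bypassing the $L^2$ estimate and the martingale-representation machinery, and makes flatness and uniqueness of $K$ transparent through the Skorokhod lemma. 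The details you flag are the right ones and do close: the equivalence of the mean constraint with $U_t\geq\psi_t$ uses $U_t\geq 0$ (automatic since $K$ is non-decreasing) and the continuity and monotonicity of $x\mapsto\mathbb{E}[\ell(t,x+\overline Y_t)]$; flatness needs that contact times with $\psi_t=0$ carry no $d\widetilde K$-mass (true, since $\max_{s\in[t,T]}(\alpha_s\psi_s)=0$ forces $\widetilde K$ constant on $[t,T]$) and that $\mathbb{E}[\ell(t,\psi_t+\overline Y_t)]=0$ whenever $\psi_t>0$ (true by continuity of $\ell$ and the infimum definition of $L_t$). You are also correct that $(H_L)$ plays no role here, in agreement with the theorem's hypotheses.
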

 \begin{proof}
It suffices to prove the case where $a_s=0$ for each $t\in[0,T]$. Indeed, denote $A_t=\int^t_0a_sds$ for each $t\in[0,T]$. Then it is easy to check that
 $(Y,Z,K)$ is a deterministic flat solution to  the BSDE \eqref{myw1} with mean reflection if and only if \[
 (Y^A_t,Z^A_t,K^A_t)=\left(e^{A_t}Y_t,e^{A_t}Z_t,\int^t_0 e^{A_s}dK_s\right)
 \]is a deterministic flat solution to  the BSDE \eqref{myw1}
associated
with the parameters
 \[
 \xi^A=e^{A_T}\xi, \ \ f^A(t,z)=e^{A_t}f(t,e^{-A_t}z), \ \, \ell^A(t,y)=\ell(t,e^{-A_t}y).
 \]

We shall prove the existence and the uniqueness separately.

{\it Step 1. Existence.}
Consider the following standard quadratic BSDE  on the time interval $[0,T]$:
 \begin{align}\label{myw3}
 \overline{Y}_t=\xi+\int^T_t f(s,\overline{Z}_s)ds-\int^T_t \overline{Z}_s dB_s.
 \end{align}
By \cite{BH1} or \cite{K1}, the equation \eqref{myw3} has a unique solution
$(\overline{Y},\overline{Z})\in\mathcal{S}^{\infty}\times BMO$, which implies that
\[
\mathbb{E}\left[\left|\int^T_0|\overline{Z}_s|^2ds\right|^p\right]<\infty, \ \forall p\geq 1.
\]
Then we recall the Assumption $(H^*_f)$ and obtain that
\begin{align*}
\mathbb{E}\left[\left|\int^T_0|f(s,\overline{Z}_s)|ds\right|^2\right]\leq &\mathbb{E}\left[\left|\int^T_0(|f(s,0)|+\lambda|\overline{Z}_s|+\lambda|\overline{Z}_s|^2)ds\right|^2\right]\\
\leq & \mathbb{E}\left[\left|\int^T_0(L+\frac{1}{2}\lambda+\frac{3}{2}\lambda|\overline{Z}_s|^2)ds\right|^2\right]<\infty.
\end{align*}
Thus from Proposition 8 in \cite{BH}, the following simple BSDE with mean reflection
\begin{align*}
\begin{cases}
&\widetilde{Y}_t=\xi+\int^T_t f(s,\overline{Z}_s)ds-\int^T_t \widetilde{Z}_s dB_s+\widetilde{K}_T-\widetilde{K}_t;\\
& \ \ \ \ \ \ \ \ \ \ \ \ \ \ \ \ \ \ \mathbb{E}[\ell(t,\widetilde{Y}_t)]\geq 0,
\end{cases}\  \  \forall t\in[0,T],
\end{align*}
admits a unique deterministic flat solution $(\widetilde{Y},\widetilde{Z},\widetilde{K})$ such that
\[
\mathbb{E}\left[\sup\limits_{s\in[0,T]}|\widetilde{Y}_s|^2+\int^T_0|\widetilde{Z}_s|^2ds\right]<\infty.
\]
Moreover, for each $t\in[0,T]$  we have\begin{equation*}
	\widetilde{K}_t= \sup_{0\leq s\leq T} L_s(\overline{X}_s)- \sup_{t\leq s\leq T} L_s(\overline{X}_s)\ \text{with}\ \overline{X}_t := \mathbb{E}_t\left[\xi+\int^T_t f(s,\overline{Z}_s)ds\right].
\end{equation*}

Consequently, $(\widetilde{Y}_t-(\widetilde{K}_T-\widetilde{K}_t),\widetilde{Z}_t)_{0\leq t\leq T}$ and $(\overline{Y}_t,\overline{Z}_t)_{0\leq t\leq T}$ are both solutions to
the following standard BSDE  on the time interval $[0,T]$,
\[
\widehat{Y}_t=\xi+\int^T_t f(s,\overline{Z}_s)ds-\int^T_t\widehat{Z}_sdB_s.
\]
By the uniqueness of solutions to BSDE (see \cite{PP}), we deduce that
\[
(\overline{Y}_t,\overline{Z}_t)=(\widetilde{Y}_t-(\widetilde{K}_T-\widetilde{K}_t),\widetilde{Z}_t), \ \forall t\in[0,T].
\]
Since $\widetilde{K}$ is a deterministic continuous process, we have $\widetilde{Y}\in\mathcal{S}^{\infty}$.
Thus $(\widetilde{Y}, \widetilde{Z}, \widetilde{K})$ is a deterministic flat solution to the BSDE \eqref{myw1} with mean reflection.

 {\it Step 2. Uniqueness.}  Assume that $(Y^*,Z^*,K^*)$  is also a deterministic flat solution to the BSDE \eqref{myw1} with mean reflection.
 Then we obtain that $({Y}_t-({K}_T-{K}_t),{Z}_t)_{0\leq t\leq T}$ and $({Y}^*_t-({K}^*_T-{K}^*_t),{Z}^*_t)_{0\leq t\leq T}$ are both solutions to the standard quadratic BSDE
 \eqref{myw3}. It follows from the uniqueness of solution for the quadratic BSDE  that $Z=Z^*$ on $[0,T]$.

 Consequently, $(Y,Z,K)$ and $(Y^*,Z^*,K^*)$ are both deterministic flat solutions to the following simple BSDE with mean reflection:
 \begin{align*}
\begin{cases}
&\widetilde{Y}_t=\xi+\int^T_t f(s,Z_s)ds-\int^T_t \widetilde{Z}_s dB_s+\widetilde{K}_T-\widetilde{K}_t;\\
& \ \ \ \ \ \ \ \ \ \ \ \ \ \ \ \ \ \ \mathbb{E}[\ell(t,\widetilde{Y}_t)]\geq 0,
\end{cases}\ \ \forall t\in[0,T].
\end{align*}
Thus recalling Proposition 8 in \cite{BH} again, we derive that $(Y,Z,K)=(Y^*,Z^*,K^*)$, which ends the proof.
 \end{proof}

 We also have the minimality of
 the deterministic flat solution and the mean comparison theorem.

\begin{proposition}
Suppose that $\ell$ is strictly increasing, then a  deterministic
flat solution $(Y, Z, K)$  is minimal among all the deterministic solutions of the BSDE \eqref{myw1} with mean reflection.
\end{proposition}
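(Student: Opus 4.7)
The plan is to reduce to the uniqueness-style argument of Step~2 in the proof of Theorem~\ref{myw2}, and then to compare the two non-decreasing processes using the explicit representation of the flat $K$. First, I would reduce to the case $a\equiv 0$ exactly as in Step~1 of the proof of Theorem~\ref{myw2}: the exponential change of variable preserves both the mean-reflection constraint and the flatness condition (because $e^{A_t}>0$), and clearly preserves the notion of an arbitrary deterministic solution. From now on assume $a_s\equiv 0$.

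Next, let $(Y^*,Z^*,K^*)\in\mathcal{S}^{\infty}\times BMO\times\mathcal{A}^{D}$ be any deterministic solution and set
\[
\bar Y_t := Y_t-(K_T-K_t),\qquad \bar Y^*_t := Y^*_t-(K^*_T-K^*_t).
\]
Because $K$ and $K^*$ are deterministic, both $(\bar Y,Z)$ and $(\bar Y^*,Z^*)$ solve the same standard quadratic BSDE with driver $f$ and terminal condition $\xi$, with bounded terminal data and a solution in $\mathcal{S}^{\infty}\times BMO$. The Kobylanski-type uniqueness theorem (already invoked in Step~2 of Theorem~\ref{myw2}) therefore forces $\bar Y=\bar Y^*$ and $Z=Z^*$. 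Consequently
\[
Y^*_t-Y_t=(K^*_T-K^*_t)-(K_T-K_t),
\]
so the minimality $Y_t\le Y^*_t$ is equivalent to $K_T-K_t\le K^*_T-K^*_t$ for every $t\in[0,T]$.

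This last inequality I would establish as follows. The constraint $\mathbb{E}[\ell(s,Y^*_s)]\ge 0$ together with $Y^*_s=\bar Y_s+(K^*_T-K^*_s)$, the nonnegativity $K^*_T-K^*_s\ge 0$, and the definition of $L_s$ (strict monotonicity of $\ell$ being used to give a sharp characterization of admissible shifts) yields
\[
L_s(\bar Y_s)\le K^*_T-K^*_s\qquad\text{for all }s\in[0,T].
\]
For the flat side, the explicit representation recalled in the existence part of Theorem~\ref{myw2} (Proposition~8 of \cite{BH}), combined with $L_T(\xi)=0$ which follows from $\mathbb{E}[\ell(T,\xi)]\ge 0$, gives
\[
K_T-K_t=\sup_{t\le s\le T}L_s(\bar Y_s).
\]
Since $K^*$ is nondecreasing, $K^*_T-K^*_s\le K^*_T-K^*_t$ for $s\ge t$, so taking the supremum over $s\in[t,T]$ in the first display and comparing with the second gives $K_T-K_t\le K^*_T-K^*_t$, as required.

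The only delicate point is to ensure that the arbitrary solution $(Y^*,Z^*,K^*)$ really lies in the function spaces in which quadratic BSDE uniqueness applies, but this is built into the paper's definition of a solution and so presents no serious obstacle. The rest is a direct combination of the identification $Y^*-Y=(K^*_T-K^*_\cdot)-(K_T-K_\cdot)$ with the representation of $K$ and the defining property of $L_s$, all of which are already available from the proof of Theorem~\ref{myw2}.
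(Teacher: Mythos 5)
Your proof is correct and follows essentially the route the paper intends: the paper's own ``proof'' is just a pointer to the argument of Theorem~12 in \cite{BH}, and your write-up is precisely that argument adapted to the quadratic case --- identify $Z=Z^*$ and $\bar Y=\bar Y^*$ via uniqueness for the unconstrained quadratic BSDE (exactly as in Step~2 of Theorem~\ref{myw2}), then compare $K_T-K_t=\sup_{t\le s\le T}L_s(\bar Y_s)$ (using $L_T(\xi)=0$) with $K^*_T-K^*_t$ through the infimum definition of $L_s$ and the monotonicity of $K^*$. The only cosmetic remark is that the inequality $L_s(\bar Y_s)\le K^*_T-K^*_s$ needs no strict monotonicity of $\ell$ at all (it is immediate from the definition of $L_s$ as an infimum, since the deterministic shift $K^*_T-K^*_s\ge 0$ is admissible); strict monotonicity enters only through the well-posedness and flat representation already established in Theorem~\ref{myw2}.
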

\begin{proof}
By a similar argument for proving Theorem 12 in \cite{BH}, we could have the desired result.
\end{proof}
  \begin{theorem}
Suppose that $(Y^i, Z^i,K^i), i=1,2$, is the deterministic flat solution  to the BSDE \eqref{myw1} with parameters  $(\xi^i, f^i,\ell)$ satisfying the
Assumptions $(H_{\ell})-(H_L)$, where $C\leq 1$.  If one of the following conditions holds:
\begin{itemize}
\item[\emph{1}.] $\mathbb{E}[\xi^1]\geq \mathbb{E}[\xi^2]$ and $\mathbb{E}[f^1]\geq \mathbb{E}[f^2]$,
where $\xi^i$ satisfies the Assumption $(H_{\xi})$ except that the boundedness is replaced by the square integrability
and $f^i$ is independent of the component $Z$, $i=1,2$, and satisfies $(H_f^*)${\rm (1)};
\item[\emph{2}.] $\xi^i=\xi+c^i$ for some real numbers $c^1\geq c^2$ and $f^1=f^2$, where $\xi$ satisfies $(H_\xi)$ and $f^i$ satisfies $(H_f^*)$, $i=1,2$.
\end{itemize}
 then
 $\mathbb{E}[Y^1_t]\geq \mathbb{E}[Y^2_t]$, for each $t\in[0,T]$.
 \end{theorem}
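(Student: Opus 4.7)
The plan is to exploit the explicit representation of the deterministic flat solution established in the proof of Theorem~\ref{myw2}: after the exponential change of variables reducing to the case $a \equiv 0$, one has
\[
Y^i_t = \overline{Y}^i_t + \sup_{t \le s \le T} L_s(\overline{Y}^i_s),
\]
where $(\overline{Y}^i, \overline{Z}^i)$ is the solution of the underlying standard quadratic BSDE with data $(\xi^i, f^i)$ (note $\overline{X}^i_s = \overline{Y}^i_s$ in the representation of $K^i$). Subtracting and taking expectations, the desired inequality reduces to
\[
\mathbb{E}[\overline{Y}^1_t - \overline{Y}^2_t] \;\ge\; \sup_{t \le s \le T} L_s(\overline{Y}^2_s) - \sup_{t \le s \le T} L_s(\overline{Y}^1_s),
\]
so in both cases I would bound the right-hand side termwise via $(H_L)$ with $C \le 1$ and match it against the hypothesis on $(\xi^i, f^i)$.

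For Case~2, I linearize the BSDE satisfied by $\Delta \overline{Y} := \overline{Y}^1 - \overline{Y}^2$: using $(H_f^*)$, there exists a progressively measurable $\beta$ with $|\beta_s| \le \lambda(1 + |\overline{Z}^1_s| + |\overline{Z}^2_s|) \in BMO$ such that
\[
\Delta \overline{Y}_t = (c^1 - c^2) - \int_t^T \Delta \overline{Z}_s \,(dB_s - \beta_s\,ds).
\]
Since $\beta \in BMO$, $\mathscr{Exp}(\beta\cdot B)_0^T$ is a true martingale, and under the corresponding equivalent measure $\mathbb{Q}$ one gets $\Delta \overline{Y}_t = \mathbb{E}_t^{\mathbb{Q}}[c^1 - c^2] = c^1 - c^2$ pointwise. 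Hence $\mathbb{E}[|\overline{Y}^1_s - \overline{Y}^2_s|] = c^1 - c^2$ for every $s$, and $(H_L)$ with $C \le 1$ yields $L_s(\overline{Y}^2_s) \le L_s(\overline{Y}^1_s) + (c^1 - c^2)$; taking suprema and comparing with $\mathbb{E}[\Delta\overline{Y}_t] = c^1 - c^2$ closes the estimate.

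For Case~1, since $f^i$ is independent of $z$, $\overline{Y}^i_s = \mathbb{E}_s[\xi^i + \int_s^T f^i(u)\,du]$ is just a conditional expectation; no Girsanov is needed. The deterministic function $A(s) := \mathbb{E}[\overline{Y}^1_s - \overline{Y}^2_s] = \mathbb{E}[\xi^1 - \xi^2] + \int_s^T \mathbb{E}[(f^1 - f^2)(u)]\,du$ is non-negative by hypothesis and non-increasing in $s$, so $A(t) = \sup_{s \in [t,T]} A(s)$. I would then combine the monotonicity $X \ge Y \Rightarrow L_s(X) \le L_s(Y)$ (inherited from monotonicity of $\ell$) with $(H_L)$ to derive the one-sided improvement $L_s(\overline{Y}^2_s) - L_s(\overline{Y}^1_s) \le C\,\mathbb{E}[(\overline{Y}^1_s - \overline{Y}^2_s)^+]$, and exploit the structure of $\overline{Y}^i$ together with the flatness condition $\int_0^T \mathbb{E}[\ell(s,Y^i_s)]\,dK^i_s = 0$ to match this bound against $A(t)$.

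The main obstacle lies in Case~1: the Lipschitz bound from $(H_L)$ controls $L_s$-differences by $\mathbb{E}[|\cdot|]$, whereas the hypothesis is only in the mean and $\overline{Y}^1_s - \overline{Y}^2_s$ need not be a.s.\ non-negative. Closing this gap requires either a delicate use of the Skorohod representation $K^i_T - K^i_t = \sup_{s \in [t,T]} L_s(\overline{Y}^i_s)$ combined with the constraint $C \le 1$, or a perturbation argument in which one first replaces $f^i$ by $f^i + \epsilon$ to force strict pathwise inequalities and then passes to the limit using continuity of the flat solution with respect to the data.
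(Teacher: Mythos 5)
Your Case 2 argument is correct and follows essentially the paper's route. The paper gets to the same place slightly more directly: since the generator does not depend on $y$ (after reducing to $a\equiv 0$) nor on the constant shift, the pairs $(Y^i_t-c^i-K^i_T+K^i_t,Z^i_t)$, $i=1,2$, both solve the \emph{same} standard quadratic BSDE \eqref{myw3}, so uniqueness immediately gives $Z^1=Z^2$ and $\overline{Y}^1-\overline{Y}^2\equiv c^1-c^2$, with no Girsanov linearization needed; your linearization argument reaches the identical conclusion and is valid ($\beta\in BMO$ makes the exponential a true martingale by Kazamaki). From there both you and the paper use $X^1_s-X^2_s=c^1-c^2$, the bound $\sup_{t\leq s\leq T}L_s(X^2_s)-\sup_{t\leq s\leq T}L_s(X^1_s)\leq C(c^1-c^2)$ from $(H_L)$, and conclude $\mathbb{E}[Y^1_t-Y^2_t]\geq (1-C)(c^1-c^2)\geq 0$.

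Case 1, however, you do not actually prove: you reduce it to the right inequality, name the obstruction, and propose two unexecuted strategies, so as a proof of the full statement there is a genuine gap. (For context, the paper writes out only Case 2 and asserts Case 1 ``can be shown in a similar fashion''; the intended analogue is $L_s(X^2_s)-L_s(X^1_s)\leq C\,\mathbb{E}[X^1_s-X^2_s]=C\,A(s)\leq A(t)$, using that your $A$ is non-increasing, whence $\mathbb{E}[Y^1_t-Y^2_t]\geq (1-C)A(t)\geq 0$.) The problematic step is exactly the one you flag: $(H_L)$ bounds $L_s$-differences by $\mathbb{E}[|X^1_s-X^2_s|]$, and passing to the mean is immediate only when $X^1_s-X^2_s$ is a nonnegative constant (Case 2) or when $X^1_s\geq X^2_s$ almost surely, which the mean hypotheses of Case 1 do not give. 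Neither of your two repairs closes this. The perturbation $f^1\mapsto f^1+\epsilon$ shifts $X^1_s$ by the deterministic amount $\epsilon(T-s)$ and still does not create the a.s.\ order that your (correct) one-sided bound $L_s(X^2_s)-L_s(X^1_s)\leq C\,\mathbb{E}[(X^1_s-X^2_s)^+]$ would need: the defect $\mathbb{E}[(X^1_s-X^2_s)^-]$ is untouched, and it does not vanish in the limit. The flatness condition, on the other hand, is already fully encoded in the representation $K^i_T-K^i_t=\sup_{t\leq s\leq T}L_s(X^i_s)$ you are using, so it carries no additional leverage. What a complete argument must exploit is that the hypothesis $C\leq 1$ effectively confines $L_s$ to be a $1$-Lipschitz, non-increasing function of the mean alone, as in Example \ref{4}; this is unavoidable, since applying Case 1 in both directions with $\xi^1=\xi^2+\eta$, $\eta$ mean-zero and $f^1=f^2=0$, forces $\sup_{t\leq s\leq T}L_s(\mathbb{E}_s[\xi^1])=\sup_{t\leq s\leq T}L_s(\mathbb{E}_s[\xi^2])$ for all $t$, hence (letting $t\uparrow T$) $L_T(\xi^2+\eta)=L_T(\xi^2)$, i.e.\ mean-determinacy of $L_T$. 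So no argument proceeding from the $L^1$-bound of $(H_L)$ alone can succeed; making the mean-dependence of $L_s$ explicit (or strengthening the hypothesis to the pointwise order $X^1_s\geq X^2_s$) is the missing ingredient in your Case 1.
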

 \begin{proof}
We only prove the second case, since the first can be shown in a similar fashion. We remark that in the first case, the solvability of  BSDEs with mean reflection is given by \cite{BH}.
 Without loss of generality, assume  that $a_s=0$ for each $s\in[0,T]$. Then $(Y^i_t-c^i-K_T^i+K_t^i,Z_t^i)_{0\leq t\leq T}, i=1,2$, are both solutions to
 the standard quadratic BSDE \eqref{myw3}, which implies that $Z^1=Z^2$.
 Since for each $t\in[0,T]$,
\begin{equation*}
	K^i_T-K^i_t= \sup_{t\leq s\leq T} L_s(X_s^i)\ \text{with}\ X_t^i := \mathbb{E}_t\left[\xi+c^i+\int^T_t f(s,Z_s^i)ds\right],
\end{equation*}
we deduce that
\[
K^2_T-K^2_t-(K^1_T-K^1_t)\leq \sup_{t\leq s\leq T} (L_s(X_s^2)-L_s(X_s^1))\leq C(c^1-c^2),
\]
 from which we obtain that for each $t\in[0,T]$,
 \[
 \mathbb{E}[Y^1_t-Y^2_t]=c^1-c^2+K^1_T-K^1_t-(K^2_T-K^2_t)\geq (1-C)(c^1-c^2)\geq 0.
 \]
 The proof is complete.
 \end{proof}

 Remark that in general we cannot have the pointwise comparison theorem for BSDEs with mean reflection. Indeed, let us look at the following counter-example.
 \begin{example}\label{6}{ \upshape
Consider  the following BSDE with mean reflection:\begin{align}\label{5}
\begin{cases}
&Y_t=\xi-\int^T_t Z_s dB_s+K_T-K_t,\\
& \ \ \ \ \ \ \ \  \mathbb{E}[Y_t]\geq 2T-t,
\end{cases} \  \  \forall t\in[0,T],
\end{align}
where $B$ is a $1$-dimensional Brownian motion.
Suppose that $(Y^1,Z^1,K^1)$ and $(Y^2,Z^2,K^2)$ are the deterministic flat solutions  to equation \eqref{5} corresponding to the terminal conditions $\xi^1=|B_T|^2$ and $\xi^2=\frac{3}{2}|B_T|^2$, respectively.
Then, for $t\in [0, T]$, the solutions of these two equations can be defined by
\begin{align*}
Y^1_t=|B_t|^2+2T-2t,\ \ &Z^1_t=2B_t,\ \  K^1_t=t;\\
Y^2_t=\frac{3}{2}|B_t|^2+\max{\left(2T-\frac{5}{2}t,\frac{3}{2}T-\frac{3}{2}t\right)},\ \ &Z^2_t=3B_t, \ \ K^2_t=\min{\left(t,\frac{T}{2}\right)}.
\end{align*}
Note that $\xi^1<\xi^2$ and $\mathbb{E}[Y^1_t]\leq \mathbb{E}[Y^2_t]$, $t\in[0,T]$. However, we have
\[\mathbb P(Y^1_t> Y^2_t)=\mathbb P(|B_t|^2<t)>0, \ \forall t\in\left(0,\frac{T}{2}\right].
\]
 }
 \end{example}

\section{General case}
In this section, we study the general quadratic BSDE \eqref{my1} with mean reflection. Namely, we consider this type of equation under the Assumption $(H_f)$ instead of $(H^*_f)$. As the first step, we prove the existence and uniqueness of the solution on a small time interval, which is called local solution.  Then we stitch local solutions to build the global solution.   

In order to construct a contractive map to find the local solution on a small time interval $[T-h, T]$, we assume in addition $(H_L)$ in this section. Here $h\in (0, 1)$  will be determined later. Since $L_t(0)$ is continuous in $t$,  without loss of generality we assume that $|L_t(0)|\leq L$ for each $t\in[0,T]$, see \cite{BH}.

For each $U\in  \mathcal{S}^{\infty}_{[T-h,T]}$,  it follows from Theorem \ref{myw2} that the following quadratic BSDE with mean reflection
\begin{align}\label{my2}
\begin{cases}
&Y_t^U=\xi+\int^T_t f(s,U_s,Z_s^U)ds-\int^T_t Z_s^U dB_s+K_T^U-K_t^U;\\
& \ \ \ \ \ \ \ \ \ \ \ \ \ \ \ \ \ \ \ \ \ \ \ \mathbb{E}[\ell(t,Y_t^U)]\geq 0,
\end{cases} \  \  t\in [T-h, T],
\end{align}
has a unique  deterministic flat solution  $(Y^U,Z^U,K^U)\in \mathcal{S}^{\infty}_{[T-h,T]}\times BMO_{[T-h,T]}\times\mathcal{A}^D_{[T-h,T]}$.  Then we define the purely quadratic solution map
$\Gamma: U\rightarrow\Gamma(U)$ by
\[
\Gamma(U):=Y^U, \ \forall U\in \mathcal{S}^{\infty}_{[T-h,T]}.
\]

In order to show that $\Gamma$ is contractive, for each real number $A\geq A_0$ we consider the following set:
\begin{eqnarray}
  \mathscr{B}_{A}:=\{ U\in \mathcal{S}^{\infty}_{[T-h,T]}:   \|U\|_{\mathcal{S}^{\infty}_{[T-h,T]}}\leq A \},
\end{eqnarray}
where \begin{equation}A_0=2(C+2)L+\frac{1}{2}C(\lambda+{4L}+4L\lambda)e^{6\lambda L}.\label{a}\end{equation}

\begin{lemma}\label{my3}
Assume that $(H_{\xi})-(H_f)-(H_{\ell})-(H_L)$ hold and $A\geq A_0$, where $A_0$ is defined by \eqref{a}.  Then there is a constant $\delta^A>0$ depending only on $L, \lambda, C$ and $A$ such that for any $h\in (0,\delta^A]$,
$\Gamma(\mathscr{B}_{A})\subset \mathscr{B}_{A}$.
\end{lemma}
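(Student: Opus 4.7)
The plan is to use the decomposition established in the proof of Theorem \ref{myw2}: for any fixed $U\in\mathscr{B}_A$, the generator $g_U(s,z):=f(s,U_s,z)$ satisfies $(H_f^*)$ with $|g_U(s,0)|\le L+\lambda A$ and the same quadratic Lipschitz constant $\lambda$, so Theorem \ref{myw2} provides a unique deterministic flat solution $(Y^U,Z^U,K^U)$ of \eqref{my2} on $[T-h,T]$ together with
\[
Y^U_t=\overline{Y}^U_t+(K_T^U-K_t^U),\quad K_T^U-K_t^U=\sup_{t\le s\le T}L_s(\overline{X}^U_s),
\]
where $(\overline{Y}^U,Z^U)$ solves the standard quadratic BSDE with terminal $\xi$ and generator $g_U$, and $\overline{X}^U_s=\mathbb{E}_s[\xi+\int_s^T g_U(r,Z^U_r)\,dr]$. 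It therefore suffices to bound $\|\overline{Y}^U\|_{\mathcal{S}^\infty_{[T-h,T]}}$ and $\sup_{s}L_s(\overline{X}^U_s)$ separately.

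First I would control $\overline{Y}^U$ by the classical Kobylanski-type a priori estimate: from $(H_f)$ the bound $|g_U(s,z)|\le (L+\lambda A+\lambda/2)+(3\lambda/2)|z|^2$ holds, and applying It\^o to $e^{\pm 3\lambda \overline{Y}^U}$ together with the comparison theorem for standard BSDEs yields
\[
\|\overline{Y}^U\|_{\mathcal{S}^\infty_{[T-h,T]}}\le L+(L+\lambda A+\lambda/2)h.
\]
With this control, a BMO energy inequality obtained by integrating the Itô expansion of $e^{6\lambda \overline{Y}^U}-1-6\lambda\overline{Y}^U$ (choosing the exponent strictly above the quadratic-growth rate $3\lambda$) against an arbitrary stopping time gives, for sufficiently small $h$,
\[
\|Z^U\|^2_{BMO_{[T-h,T]}}\le C_1 e^{6\lambda L},
\]
with $C_1$ depending only on $L$ and $\lambda$; this is the critical step, because the inhomogeneous contribution is $O((L+\lambda A)h)$ and can be absorbed only after $h$ is chosen small in terms of $A$.

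Next I would estimate $L_s(\overline{X}^U_s)$ by combining $(H_L)$, the bound $|L_s(0)|\le L$ and $(H_f)$. Cauchy--Schwarz together with the BMO bound above yields
\[
\mathbb{E}[|\overline{X}^U_s|]\le L+(L+\lambda A)h+\lambda\sqrt{h}\,\|Z^U\|_{BMO}+\lambda\|Z^U\|^2_{BMO},
\]
so that $\sup_{s}L_s(\overline{X}^U_s)\le L+C\,\sup_{s}\mathbb{E}[|\overline{X}^U_s|]\le L+CL+C\lambda C_1 e^{6\lambda L}+\varepsilon(h)$, where $\varepsilon(h)\to 0$ as $h\to 0$. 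Summing the two estimates gives
\[
\|Y^U\|_{\mathcal{S}^\infty_{[T-h,T]}}\le 2L+CL+C\lambda C_1 e^{6\lambda L}+\varepsilon(h),
\]
whose $h\to 0$ limit is exactly the structure of $A_0$ in \eqref{a}. Choosing $A\ge A_0$ and then $\delta^A\in(0,T]$ small enough, depending only on $A,C,L,\lambda$, so that the error $\varepsilon(h)$ and the $A$-dependent $h$-terms in the $\overline{Y}^U$ and BMO bounds are swallowed by the slack $A-A_0$, concludes the proof.

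The main obstacle is the BMO step: one needs a bound on $\|Z^U\|_{BMO}$ whose leading order depends only on $L,\lambda$ and not on the a priori bound $A$ on $U$, since otherwise the quadratic $\lambda\|Z^U\|^2_{BMO}$ contribution to $\mathbb{E}[|\overline{X}^U_s|]$, amplified by the Lipschitz constant $C$ of $L_s$, would grow with $A$ and prevent $\Gamma(\mathscr{B}_A)\subset\mathscr{B}_A$ from being achievable. This is precisely what forces the choice of the exponent $6\lambda$ appearing in $A_0$ and explains why $\delta^A$ must be taken to depend on $A$.
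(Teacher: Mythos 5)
Your proposal is correct and follows essentially the same route as the paper: the decomposition from Theorem \ref{myw2}, the estimate $\|\overline{Y}^U\|_{\mathcal{S}^{\infty}_{[T-h,T]}}\leq L+(L+\lambda A)h$ capped at $2L$ by taking $\delta^A$ of order $L/(L+\lambda A+\lambda/2)$, the resulting $A$-independent BMO bound of order $e^{6\lambda L}$ (which the paper gets by citing Proposition 2.1 of \cite{BH1}, whereas you rederive it via the exponential energy inequality), and the $(H_L)$-based estimate of $\sup_{s}L_s(X_s^U)$. In particular, your identification of the crux --- that the leading order of the BMO bound must be free of $A$, which is precisely what forces $\delta^A$ to depend on $A$ --- is exactly the mechanism of the paper's proof.
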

\begin{proof}
In view of the proof of Theorem  \ref{myw2}, we conclude that for each $t\in[T-h,T]$,
\[
(Y^U_t,Z^U_t)=(\overline{Y}^U_t+(K^U_T-K^U_t),\overline{Z}^U_t),
\]
where $(\overline{Y}^U, \overline{Z}^U)\in \mathcal{S}^{\infty}_{[T-h,T]}\times BMO_{[T-h,T]}$ is the solution to the following standard BSDE on the time interval $[T-h,T]$
\begin{align}\label{my4}
\overline{Y}^U_t=\xi+\int^T_t f(s,U_s,\overline{Z}_s^U)ds-\int^T_t \overline{Z}_s^U dB_s,
\end{align}
and for each $t\in[T-h,T]$,
\begin{equation*}
	K^U_T-K^U_t= \sup_{t\leq s\leq T} L_s(X_s^U)\ \text{with}\ X_t^U := \mathbb{E}_t\left[\xi+\int^T_t f(s,U_s,Z_s^U)ds\right].
\end{equation*}
Consequently, we obtain that
 \begin{align}\label{my9}
 \|{Y}^U\|_{\mathcal{S}^{\infty}_{[T-h,T]}}\leq \|\overline{Y}^U\|_{\mathcal{S}^{\infty}_{[T-h,T]}}+ \sup_{T-h\leq s\leq T}L_s(X_s^U).
  \end{align}
The remainder of the proof will be in two steps.

{\it Step 1. The estimate of  $\overline{Y}^U$.}
Since $\overline{Z}^U\in BMO_{[T-h,T]} $, we can find a vector process $\beta\in BMO_{[T-h,T]}$  such that
\begin{eqnarray*}
f(s,U_s,\overline{Z}_s^U)-f(s,U_s,0)=\overline{Z}_s^U \beta_s, \ \forall s\in[T-h,T].
\end{eqnarray*}

Then $\widetilde {B}_t:=B_t-\int_{T-h}^t\beta_sds\mathbf{1}_{\{t\geq T-h\}}$, defines a Brownian motion under the equivalent probability measure $\widetilde{\mathbb{P}}$ given by
$$d\widetilde{\mathbb{P}}: =\mathscr{Exp} (\beta\cdot B)_{T-h}^T d\mathbb{P}.$$
Thus by the equation \eqref{my4}, we have
\[
\overline{Y}^U_t=\mathbb{E}^{\widetilde{\mathbb{P}}}_t\left[\xi+\int^T_t f(s,U_s,0)ds\right],\  \ \forall t\in [T-h, T],
\]
which implies that
\begin{align}\label{my1011}
\|\overline{Y}^U\|_{\mathcal{S}^{\infty}_{[T-h,T]}}\leq L+(L+\lambda A)h,
\end{align}
where we have used the fact that $f(s,U_s,0)$ is bounded by $L+\lambda A$. Thus recalling Proposition 2.1 in \cite{BH1},  we have
\begin{align}\label{my6}
\|Z^U\|_{BMO_{[T-h, T]}}^2\leq \frac{1}{3}\left(1+\frac{4L}{\lambda}+2\|\overline{Y}^U\|_{\mathcal{S}^{\infty}_{[T-h,T]}}\right)e^{3\lambda\|\overline{Y}^U\|_{\mathcal{S}^{\infty}_{[T-h,T]}}}.
\end{align}
{\it Step 2. The estimate of $Y^U$.}
Thanks to the Assumption $(H_L)$, for each $s\in[T-h,T]$ we have\[
|L_s(X_s^U)-L_s(0)|\leq C\mathbb{E}[|X^U_s|].
\]
 Therefore from the definition $X^U$ and Assumption ($H_f$) we deduce that
 \begin{align}
 \sup_{T-h\leq s\leq T}L_s(X_s^U)\leq &L+ C\sup_{T-h\leq s\leq T}\mathbb{E}\left[|\xi|+\int^T_s \left(|f(r,U_r, 0)|+\frac{1}{2}\lambda+\frac{3}{2}\lambda|Z_r^U|^2\right)dr\right]\nonumber\\
 \leq & (C+1)L+Ch\left(L+\lambda A+\frac{1}{2}\lambda\right)+\frac{3}{2}C\lambda\|Z^U\|_{BMO_{[T-h, T]}}^2.\label{my77}
 \end{align}
Then we define
\begin{equation}
\delta^A:=\min\left(\frac{L}{L+\lambda A+\frac{1}{2}\lambda},T\right).
\label{deltaa}
\end{equation}
Recalling equations \eqref{my9}, \eqref{my1011}, \eqref{my6} and \eqref{my77}, we derive that for each $h\in(0, \delta^A]$,
\[
 \|{Y}^U\|_{\mathcal{S}^{\infty}_{[T-h,T]}}\leq  2(C+2)L+\frac{1}{2}C(\lambda+{4L}+4L\lambda)e^{6\lambda L}=A_0\leq A,
\]
which is the desired result.
\end{proof}

Now we  show the contractive property of the purely quadratic solution map $\Gamma$.
\begin{lemma} \label{my15}
Assume that $(H_{\xi})-(H_f)-(H_{\ell})-(H_L)$ hold and $A\geq A_0$, where $A_0$ is defined by \eqref{a}. Then there exists a constant $\widehat{\delta}^A$ such that $0<\widehat{\delta}^A\leq\delta^A$ and for
 any $h\in (0,\widehat{\delta}^A]$, we have
\[\|\Gamma(U^1)-\Gamma({U}^2)\|_{\mathcal{S}^{\infty}_{[T-h,T]}}\leq \frac{1}{2}\|U^1-{U}^2\|_{\mathcal{S}^{\infty}_{[T-h,T]}}, \ \ \forall U^1, {U}^2\in \mathscr{B}_{A}.\]
\end{lemma}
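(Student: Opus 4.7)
As in the proof of Lemma \ref{my3}, I would decompose $\Gamma(U)_t = \overline{Y}^U_t + \sup_{t\leq s\leq T} L_s(X^U_s)$ where $(\overline{Y}^U,\overline{Z}^U)$ solves the classical quadratic BSDE \eqref{my4} and $X^U_t = \mathbb{E}_t[\xi+\int_t^T f(s,U_s,Z^U_s)\,ds]$, with $Z^U = \overline{Z}^U$. For $U^1,U^2\in\mathscr{B}_A$, set $\Delta U=U^1-U^2$, $\Delta\overline{Y}=\overline{Y}^{U^1}-\overline{Y}^{U^2}$, $\Delta Z = Z^{U^1}-Z^{U^2}$. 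Since
\[
|\Gamma(U^1)_t-\Gamma(U^2)_t|\leq |\Delta\overline{Y}_t|+\sup_{t\leq s\leq T}|L_s(X^{U^1}_s)-L_s(X^{U^2}_s)|,
\]
the aim is to bound each term by a constant (depending only on $L,\lambda,C,A$) times $h\cdot\|\Delta U\|_{\mathcal{S}^\infty_{[T-h,T]}}$, so that for $h$ small enough the contraction factor is at most $1/2$.

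\textbf{Step 1: linearize and estimate $\Delta\overline{Y}$.} Exploiting $(H_f)$, write
\[
f(s,U^1_s,\overline{Z}^{U^1}_s)-f(s,U^2_s,\overline{Z}^{U^2}_s)=\alpha_s\Delta U_s+\beta_s\cdot\Delta\overline{Z}_s,
\]
with $|\alpha_s|\leq\lambda$ and $|\beta_s|\leq \lambda(1+|\overline{Z}^{U^1}_s|+|\overline{Z}^{U^2}_s|)$. Lemma \ref{my3} bounds $\|\overline{Z}^{U^i}\|_{BMO_{[T-h,T]}}$ uniformly for $U^i\in\mathscr{B}_A$, so $\beta\in BMO_{[T-h,T]}$ with a norm depending only on $L,\lambda,A$. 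Consequently $\mathscr{Exp}(\beta\cdot B)_{T-h}^T$ defines an equivalent measure $\widetilde{\mathbb{P}}$ under which
\[
\Delta\overline{Y}_t=\mathbb{E}^{\widetilde{\mathbb{P}}}_t\!\left[\int_t^T \alpha_s\Delta U_s\,ds\right],
\]
since $\Delta\overline{Y}_T=0$. This yields the pathwise bound $\|\Delta\overline{Y}\|_{\mathcal{S}^\infty_{[T-h,T]}}\leq \lambda h\,\|\Delta U\|_{\mathcal{S}^\infty_{[T-h,T]}}$.

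\textbf{Step 2: $L^2$-estimate on $\Delta\overline{Z}$.} Applying It\^{o}'s formula to $|\Delta\overline{Y}|^2$ on $[T-h,T]$, using the elementary inequality $2|\Delta\overline{Y}_s\beta_s\Delta\overline{Z}_s|\leq \tfrac12|\Delta\overline{Z}_s|^2+2|\Delta\overline{Y}_s|^2|\beta_s|^2$, taking expectations (the stochastic integral is a true martingale since $\Delta\overline{Y}$ is bounded and $\Delta\overline{Z}\in BMO_{[T-h,T]}$), and inserting $\mathbb{E}[\int_{T-h}^T|\beta_s|^2ds]\leq\|\beta\|_{BMO_{[T-h,T]}}^2$ together with the Step 1 bound, I obtain
\[
\mathbb{E}\!\left[\int_{T-h}^T|\Delta\overline{Z}_s|^2ds\right]\leq C_1\,h^2\,\|\Delta U\|_{\mathcal{S}^\infty_{[T-h,T]}}^2
\]
for some $C_1=C_1(L,\lambda,C,A)$.

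\textbf{Step 3: the $K$-term and conclusion.} From $(H_L)$ and $|\sup f_1-\sup f_2|\leq\sup|f_1-f_2|$,
\[
\sup_{t\leq s\leq T}|L_s(X^{U^1}_s)-L_s(X^{U^2}_s)|\leq C\,\mathbb{E}\!\left[\int_{T-h}^T|f(s,U^1_s,Z^{U^1}_s)-f(s,U^2_s,Z^{U^2}_s)|\,ds\right].
\]
Splitting via $(H_f)$, Cauchy--Schwarz, the BMO bounds on $Z^{U^i}$ from Lemma \ref{my3} (which control $\mathbb{E}[\int_{T-h}^T(1+|Z^{U^1}|+|Z^{U^2}|)^2ds]$), and Step 2, this is bounded by $C_2\,h\,\|\Delta U\|_{\mathcal{S}^\infty_{[T-h,T]}}$ for a constant $C_2=C_2(L,\lambda,C,A)$. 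Combining with Step 1 produces $\|\Gamma(U^1)-\Gamma(U^2)\|_{\mathcal{S}^\infty_{[T-h,T]}}\leq C_3\,h\,\|\Delta U\|_{\mathcal{S}^\infty_{[T-h,T]}}$, and $\widehat{\delta}^A:=\min(\delta^A,\,1/(2C_3))$ yields the claim.

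\textbf{Main obstacle.} The delicate point is keeping the Girsanov change of measure under control: $\beta$ is only locally in $BMO$, and its norm a priori grows with the $BMO$-norms of the $Z^{U^i}$. What saves the argument is precisely the restriction $U\in\mathscr{B}_A$, so Lemma \ref{my3} provides a uniform $BMO_{[T-h,T]}$ bound on $Z^{U^i}$ depending only on $L,\lambda,C,A$; this in turn controls every constant appearing in Steps 1--3 and allows the contraction to be recovered purely by shrinking $h$.
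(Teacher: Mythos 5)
Your proposal is correct, and its architecture coincides with the paper's: the same decomposition $\Gamma(U)_t=\overline{Y}^U_t+\sup_{t\leq s\leq T}L_s(X^U_s)$ with $Z^U=\overline{Z}^U$, the same Girsanov linearization yielding $\|\Delta\overline{Y}\|_{\mathcal{S}^{\infty}_{[T-h,T]}}\leq \lambda h\|\Delta U\|_{\mathcal{S}^{\infty}_{[T-h,T]}}$, an It\^o argument on $|\Delta\overline{Y}|^2$ for the $\Delta\overline{Z}$ term, and $(H_L)$ plus Cauchy--Schwarz against the uniform BMO bound of Lemma \ref{my3} for the $K$-term; your closing remark that the restriction to $\mathscr{B}_{A}$ is what keeps the Girsanov density under control is exactly the mechanism the paper relies on. The one genuine deviation is in your Step 2: you settle for the unconditional estimate $\mathbb{E}\bigl[\int_{T-h}^T|\Delta\overline{Z}_s|^2ds\bigr]\leq C_1h^2\|\Delta U\|^2_{\mathcal{S}^{\infty}_{[T-h,T]}}$, whereas the paper runs the It\^o computation under conditional expectations at arbitrary stopping times and obtains the full BMO estimate \eqref{est1}, namely $\|Z^{U^1}-Z^{U^2}\|_{BMO_{[T-h,T]}}\leq 2\sqrt{1+3\lambda^2+4A\lambda/C}\,\lambda h\|\Delta U\|_{\mathcal{S}^{\infty}_{[T-h,T]}}$. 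For the lemma itself your weaker estimate suffices, for precisely the reason you identify: the $K$-term involves only an unconditional expectation over $[T-h,T]$, so Cauchy--Schwarz needs only the $\mathcal{L}^2$ norm of $\Delta Z$ (indeed the paper's own inequality \eqref{my12} uses $\|\Delta Z\|_{BMO_{[T-h,T]}}$ merely as an upper bound for that $\mathcal{L}^2$ norm). What the paper's stronger version buys is downstream: \eqref{est1} is invoked again in the proof of Theorem \ref{my16} to show that the Picard iterates $Z^n$ converge in $BMO_{[T-h,T]}$ (the convergence \eqref{zconv}), which is used to place the limit $Z$ in $BMO_{[T-h,T]}$ and to pass to the limit in $X^n$ for the flatness of $K$; with only your $\mathcal{L}^2$ estimate, that convergence would have to be upgraded separately when the contraction is exploited.
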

\begin{proof}
For each $i=1,2$, set
\[
Y^{U^i}=\Gamma(U^i),
\]
where $(Y^{U^i}, Z^{U^i},K^{U^i})$ is the solution to the BSDE \eqref{my2} with mean reflection associated with the data $U^i$.
Applying Theorem  \ref{myw2} again, we conclude that for each $t\in[T-h,T]$,
\begin{equation}\label{repy}
(Y^{U^i}_t, Z^{U^i}_t)=(\overline{Y}^{U^i}_t+(K^{U^i}_T-K^{U^i}_t),\overline{Z}^{U^i}_t),
\end{equation}
where $(\overline{Y}^{U^i}, \overline{Z}^{U^i})\in \mathcal{S}^{\infty}_{[T-h,T]}\times BMO_{[T-h,T]} $ is the solution to the BSDE  \eqref{my4} associated with the data $U^i$.
Since for each $t\in[T-h,T]$, \begin{equation*}
	K^{U^i}_T-K^{U^i}_t= \sup_{t\leq s\leq T} L_s(X_s^{{U^i}})\ \text{with}\ X_t^{U^i} := \mathbb{E}_t\left[\xi+\int^T_t f(s,U^i_s,Z_s^{U^i})ds\right],
\end{equation*}
 we have
 \begin{align}
\sup_{T-h\leq t\leq T} |(K^{U^1}_T&-K^{U^1}_t)-(K^{U^2}_T-K^{U^2}_t)|\notag\\&\leq  \sup_{T-h\leq s\leq T}|L_s(X_s^{U^1})-L_s(X_s^{U^2})|\label{estk}\\
 &\leq C\lambda\mathbb{E}\left[\int^T_{T-h} (|U^1_r-U^2_r|+(1+|Z_r^{U^1}|+|Z_r^{U^2}|)|Z_r^{U^1}-Z_r^{U^2}|)dr\right].\notag
 \end{align}
 Applying H\"older's inequality, we obtain
 \begin{align}\label{my12}
  &\mathbb{E}\left[\int^T_{T-h}(1+|Z_r^{U^1}|+|Z_r^{U^2}|)|Z_r^{U^1}-Z_r^{U^2}|dr\right]\nonumber\\
  &\leq \sqrt{3} \mathbb{E}\left[\int^T_{T-h}(1+|Z_r^{U^1}|^2+|Z_r^{U^2}|^2)dr\right]^{\frac{1}{2}}\|Z^{U^1}-Z^{U^2}\|_{BMO_{[T-h,T]}}
\\&\leq \sqrt{3+\frac{4A}{C\lambda}}\|Z^{U^1}-Z^{U^2}\|_{BMO_{[T-h,T]}},\notag
\end{align}
 where the last inequality is deduced from the fact that $\|Z^{U^i}\|_{BMO_{[T-h,T]}}^2\leq \frac{2A}{3C\lambda}$ (see \eqref{my6}).

We recall the representation (\ref{repy}) and conclude that
  \begin{align}\label{my11}
  &\|{Y}^{U^1}-{Y}^{U^2}\|_{\mathcal{S}^{\infty}_{[T-h,T]}}\notag\\
  &\leq  \|\overline{Y}^{U^1}-\overline{Y}^{U^2}\|_{\mathcal{S}^{\infty}_{[T-h,T]}}+ \sup_{T-h\leq t\leq T} |(K^{U^1}_T-K^{U^1}_t)-(K^{U^2}_T-K^{U^2}_t)|\\
  &\leq  \|\overline{Y}^{U^1}-\overline{Y}^{U^2}\|_{\mathcal{S}^{\infty}_{[T-h,T]}}+ C\lambda (h\|{U^1}-{U^2}\|_{\mathcal{S}^{\infty}_{[T-h,T]}}+\sqrt{3+\frac{4A}{C\lambda}}\|Z^{U^1}-Z^{U^2}\|_{BMO_{[T-h,T]}}).
   \nonumber\end{align}
The remainder of the proof will be in two steps.

{\it Step 1. The estimate of $\|\overline{Y}^{U^1}-\overline{Y}^{U^2}\|_{\mathcal{S}^{\infty}_{[T-h,T]}}$.}
By the linearization argument, we can find a vector process $\widehat{\beta}\in BMO_{[T-h,T]}$ such that
\begin{align*}
f(s,U^1_s, \overline{Z}^{U^1}_s)-f(s,U^1_s, \overline{Z}^{U^2}_s)=  (\overline{Z}^{U^1}_s-\overline{Z}^{U^2}_s)\widehat{\beta}_s.
\end{align*}
Then $\widehat{B}_t:=B_t-\int_{T-h}^t\widehat{\beta}_sds\mathbf{1}_{\{t\geq T-h\}}$  defines a Brownian motion under the equivalent probability measure $\widehat{\mathbb{P}}$ given by
$$d\widehat{\mathbb{P}}: =\mathscr{Exp} (\widehat{\beta}\cdot B)_{T-h}^T d\mathbb{P}.$$
Thus by equation \eqref{my4}, we have
\[
\overline{Y}^{U^1}_t-\overline{Y}^{U^2}_t=\mathbb{E}^{\widehat{\mathbb{P}}}_t\left[\int^T_t (f(s,U^1_s,\overline{Z}^{U^2}_s)-f(s,U^2_s,\overline{Z}^{U^2}_s))ds\right], \ \forall t\in[T-h,T],
\]
which implies that
\begin{align}\label{my10}
\|\overline{Y}^{U^1}-\overline{Y}^{U^2}\|_{\mathcal{S}^{\infty}_{[T-h,T]}}\leq \lambda h\|{U^1}-{U^2}\|_{\mathcal{S}^{\infty}_{[T-h,T]}}.
\end{align}

{\it Step 2. The estimate of $\|Z^{U^1}-Z^{U^2}\|_{BMO_{[T-h,T]}}$.}
Note that for each $t\in[T-h,T]$,
\[
\overline{Y}^{U^1}_t-\overline{Y}^{U^2}_t=\int^T_t(f(s,U^1_s,\overline{Z}^{U^1}_s)-f(s,U^2_s,\overline{Z}^{U^2}_s))ds-\int^T_t(\overline{Z}^{U^1}_s-\overline{Z}^{U^2}_s)dB_s.
\]
Then applying It\^o's formula to $|\overline{Y}^{U^1}_t-\overline{Y}^{U^2}_t|^2$, we have
\[
\|Z^{U^1}-Z^{U^2}\|_{BMO_{[T-h,T]}}^2\leq 2\sup\limits_{\tau\in\mathcal{T}_{[T-h,T]}}
\left\|\mathbb{E}_{\tau}\left[\int^T_{\tau}|\overline{Y}^{U^1}_s-\overline{Y}^{U^2}_s||f(s,U^1_s,\overline{Z}^{U^1}_s)-f(s,U^2_s,\overline{Z}^{U^2}_s)|ds\right]\right\|_{\mathcal{L}^{\infty}}.
\]
By the Assumption ($H_f$), the inequalities \eqref{my12} and \eqref{my10}, we deduce that for any $h\in (0, \delta^A]$,
\begin{align*}
&\sup\limits_{\tau\in\mathcal{T}_{[T-h,T]}}
\left\|\mathbb{E}_{\tau}\left[\int^T_{\tau}|\overline{Y}^{U^1}_s-\overline{Y}^{U^2}_s||f(s,U^1_s,\overline{Z}^{U^1}_s)-f(s,U^2_s,\overline{Z}^{U^2}_s)|ds\right]\right\|_{\mathcal{L}^{\infty}}\\
&\leq \lambda^2h^2\|{U^1}-{U^2}\|_{\mathcal{S}^{\infty}_{[T-h,T]}}^2\\
& \ \ \ \ \ \ \ \ +\lambda^2 h\|{U^1}-{U^2}\|_{\mathcal{S}^{\infty}_{[T-h,T]}}\sup\limits_{\tau\in\mathcal{T}_{[T-h,T]}}\left\|
\mathbb{E}_{\tau}\left[\int^T_{\tau}(1+|Z_s^{U^1}|+|Z_s^{U^2}|)|Z_s^{U^1}-Z_s^{U^2}|ds\right]\right\|_{\mathcal{L}^{\infty}}\\
&\leq \lambda^2h^2\|{U^1}-{U^2}\|_{\mathcal{S}^{\infty}_{[T-h,T]}}^2+\sqrt{3\lambda^2+4A\lambda/C}\lambda h\|{U^1}-{U^2}\|_{\mathcal{S}^{\infty}_{[T-h,T]}}  \|Z^{U^1}-Z^{U^2}\|_{BMO_{[T-h,T]}}\\
&\leq(1+3\lambda^2+4A\lambda/C)\lambda^2 h^2\|{U^1}-{U^2}\|_{\mathcal{S}^{\infty}_{[T-h,T]}}^2+\frac{1}{4}  \|Z^{U^1}-Z^{U^2}\|_{BMO_{[T-h,T]}}^2,
\end{align*}
which together with the previous inequality implies that
\begin{equation}\label{est1}
\|Z^{U^1}-Z^{U^2}\|_{BMO_{[T-h,T]}} \leq2\sqrt{1+3\lambda^2+4A\lambda/C}\lambda  h\|{U^1}-{U^2}\|_{\mathcal{S}^{\infty}_{[T-h,T]}}.
\end{equation}
We put the estimates (\ref{my10}) and (\ref{est1}) into \eqref{my11} and obtain
\[
\|{Y}^{U^1}-{Y}^{U^2}\|_{\mathcal{S}^{\infty}_{[T-h,T]}}\leq 8\lambda h(\lambda^2+1)(A+C+1) \|{U^1}-{U^2}\|_{\mathcal{S}^{\infty}_{[T-h,T]}},
\]
where we note
\[
2\sqrt{1+3\lambda^2+4A\lambda/C}\sqrt{3\lambda^2+4A\lambda/C}\leq 1+6\lambda^2+8A\lambda/C.
\]

Now we define
\begin{equation}\label{defdelta}
\widehat{\delta}^A:=\min\left( \frac{1}{ 16\lambda(\lambda^2+1)(A+C+1) }, \delta^A\right),
\end{equation}
and it is straightforward to check that for any $h \in (0, \widehat{\delta}^A]$,
\[
\|{Y}^{U^1}-{Y}^{U^2}\|_{\mathcal{S}^{\infty}_{[T-h,T]}}\leq \frac{1}{2} \|{U^1}-{U^2}\|_{\mathcal{S}^{\infty}_{[T-h,T]}},
\]
which completes the proof.
\end{proof}

\medskip
Now we are in a position to prove Theorem \ref{my16}.

\begin{proof}[Proof of Theorem \ref{my16}]
We take $A\geq A_0$ and choose $\widehat{\delta}^A$ as (\ref{defdelta}). For $h\in (0,\widehat{\delta}^A]$, define $Y^0=0$ and by (\ref{my4}), define $(Y^1, Z^1, K^1):=(Y^{Y^0}, Z^{Y^0}, K^{Y^0})$. By recurrence, for each $i\in \mathbb{N}$, set
\begin{equation}\label{yconv}
(Y^{i+1}, Z^{i+1}, K^{i+1}):=(Y^{Y^i}, Z^{Y^i}, K^{Y^i}).
\end{equation}
It follows from Lemma \ref{my15} that there exists $Y\in \mathscr{B}_A$ such that $$\|{Y}^{n}-{Y}\|_{\mathcal{S}^{\infty}_{[T-h,T]}}\longrightarrow 0.$$
By (\ref{est1}) and (\ref{estk}), there exist $Z\in BMO_{[T-h,T]}$ and $K\in \mathcal{A}^D_{[T-h, T]}$ such that
\begin{equation}\label{zconv}
\|{Z}^{n}-{Z}\|_{BMO_{[T-h,T]}}\longrightarrow 0 \qquad {\rm and}\qquad \|{K}^{n}-{K}\|_{\mathcal{S}^{\infty}_{[T-h,T]}}\longrightarrow 0.
\end{equation}
By a standard argument, we have for each $t\in [T-h, T]$, in $\mathcal{L}^2$,
$$
\int^T_t f(s, Y^n_s, Z^{n+1}_s)ds\longrightarrow \int^T_t f(s, Y_s, Z_s)ds.
$$
Thus, the triple $(Y, Z, K)$ is a solution to the BSDE (\ref{my1}) with mean reflection and we only need to prove that the solution $(Y, Z, K)$ is ``flat''. Indeed, it is easy to check that
$$
K_T-K_t =\lim_{n\rightarrow \infty} K^n_T-K^n_t
=\lim_{n\rightarrow \infty}\sup_{t\leq s\leq T} L_s(X^n_s),$$
where $X_t^n := \mathbb{E}_t\left[\xi+\int^T_t f(s,Y^{n-1}_s,Z^n_s)ds\right].$ By (\ref{yconv}) and (\ref{zconv}), we deduce
$$\|X^n-X\|_{{\mathcal{S}^{\infty}_{[T-h,T]}}}\longrightarrow 0,$$ where
 $X_t := \mathbb{E}_t\left[\xi+\int^T_t f(s,Y_s,Z_s)ds\right].$ Therefore, $K_T-K_t =
\sup_{t\leq s\leq T} L_s(X_s)$, which implies the ``flatness''. Similarly to the Step 2 of the proof for Theorem  \ref{myw2}, we deduce the uniqueness by recalling Theorem 9 in \cite{BH}. The proof is complete.
\end{proof}
\medskip

In what follows, we construct a global solution to the quadratic BSDE \eqref{my1} with mean reflection on the whole interval $[0,T]$ by backward recursion in time, namely, we prove Theorem \ref{my202}.
To this end, we  first observe that any local solution $(Y,Z,K)$ on $[T-h,T]$ of the BSDE \eqref{my1}  with mean reflection has a uniform estimate if we assume additionally $(H_f^{\prime})$.

\begin{lemma}\label{my20}
Assume that $(H_{\xi})-(H_f)-(H_f^{\prime})-(H_{\ell})-(H_L)$ hold and the BSDE \eqref{my1} with mean reflection has a local solution $(Y,Z,K)\in \mathcal{S}^{\infty}_{[T-h,T]}\times BMO_{[T-h,T]}\times\mathcal{A}^D_{[T-h,T]}$ on $[T-h, T]$ for some $0<h\leq T$.
Then there exists a constant $\overline{L}$ depending only on $C, L, \lambda$ and $T$ such that
\[
\|Y\|_{\mathcal{S}^{\infty}_{[T-h,T]}}\leq \overline{L}.
\]
\end{lemma}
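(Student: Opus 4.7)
The plan is to use the same decomposition trick that underlies Theorem \ref{myw2} and Lemma \ref{my3}: since $K$ is deterministic, the process $\overline{Y}_t := Y_t - (K_T-K_t)$ solves the standard (unconstrained) quadratic BSDE
\[
\overline{Y}_t = \xi + \int_t^T f(s,Y_s,Z_s)\, ds - \int_t^T Z_s\, dB_s, \qquad t\in[T-h,T],
\]
so that $\|Y\|_{\mathcal{S}^\infty_{[T-h,T]}} \leq \|\overline{Y}\|_{\mathcal{S}^\infty_{[T-h,T]}} + \sup_{T-h\leq t\leq T}(K_T-K_t)$. The crucial observation is that the new assumption $(H_f^\prime)$ uncouples the estimate of $\overline{Y}$ from the size of $Y$ itself: $|f(s,y,0)|\leq L$ no matter how large $\|Y\|_\infty$ is. This is what will break the apparent circularity that would arise if one only had the general $(H_f)$.

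First I would bound $\overline{Y}$ by linearization in $z$. Since $Z\in BMO_{[T-h,T]}$ and $(H_f)$ gives quadratic growth, one can write $f(s,Y_s,Z_s)-f(s,Y_s,0) = \langle\beta_s,Z_s\rangle$ for some $\beta\in BMO_{[T-h,T]}$ with $|\beta_s|\leq \lambda(1+|Z_s|)$. Girsanov's theorem under the measure $d\widetilde{\mathbb{P}} = \mathscr{Exp}(\beta\cdot B)_{T-h}^T d\mathbb{P}$ then yields
\[
\overline{Y}_t = \mathbb{E}^{\widetilde{\mathbb{P}}}_t\left[\xi + \int_t^T f(s,Y_s,0)\, ds\right], \qquad t\in[T-h,T],
\]
and combining $(H_\xi)$ with $(H_f^\prime)$ gives the uniform bound $\|\overline{Y}\|_{\mathcal{S}^\infty_{[T-h,T]}}\leq L(1+T)$. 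Next, applying Proposition 2.1 of \cite{BH1} (which is exactly \eqref{my6} applied to the standard BSDE above, viewed with generator $g(s,z):=f(s,Y_s,z)$ for which $|g(s,0)|\leq L$ and the quadratic part has constant $\lambda$) yields a BMO bound $\|Z\|_{BMO_{[T-h,T]}}^2 \leq M^2$ with $M$ depending only on $L,\lambda,T$.

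Finally I would estimate the reflection term using $(H_L)$ and the representation (already exploited in Lemma \ref{my3})
\[
K_T-K_t = \sup_{t\leq s\leq T}L_s(X_s), \qquad X_s := \mathbb{E}_s\!\left[\xi+\int_s^T f(r,Y_r,Z_r)\,dr\right].
\]
Since $|L_s(0)|\leq L$ and $(H_L)$ gives $|L_s(X_s)-L_s(0)|\leq C\,\mathbb{E}[|X_s|]$, combined with the quadratic bound $|f(r,Y_r,Z_r)|\leq L + \tfrac{\lambda}{2} + \tfrac{3\lambda}{2}|Z_r|^2$ (obtained from $(H_f)$, $(H_f^\prime)$ and AM--GM) and the previous BMO bound, one gets a uniform estimate of $K_T-K_{T-h}$ in terms of $C,L,\lambda,T,M$. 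Summing the two contributions produces the desired constant $\overline{L}$, which by construction is independent of $h$. The main (and only) non-routine point is the measure-change step in which $(H_f^\prime)$ is used to convert the potentially unbounded dependence on $Y$ into a harmless constant inside the conditional expectation; the rest is a sequence of BMO and Lipschitz bookkeeping steps.
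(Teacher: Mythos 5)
Your proposal is correct and follows essentially the same route as the paper: the deterministic-shift decomposition $\overline{Y}_t = Y_t-(K_T-K_t)$, the use of $(H_f^{\prime})$ to make the generator's $y$-dependence harmless, the BMO bound on $Z$ via Proposition 2.1 of \cite{BH1}, and the control of $K_T-K_t=\sup_{t\leq s\leq T}L_s(X_s)$ through $(H_L)$ and the quadratic growth bound $|f|\leq L+\tfrac{\lambda}{2}+\tfrac{3\lambda}{2}|Z|^2$. The only (cosmetic) deviation is that you re-derive the bound on $\overline{Y}$ by an explicit Girsanov linearization instead of citing Proposition 2.2 of \cite{BH1} as the paper does --- this is the same argument the paper itself uses in Lemma \ref{my3}, and since $(H_f^{\prime})$ removes any genuine $y$-dependence it even yields the slightly sharper constant $L(1+T)$ in place of $L(T+1)e^{\lambda T}$.
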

\begin{proof}
Note that
\[
(Y_t,Z_t)=(\overline{Y}_t+(K_T-K_t),\overline{Z}_t),  \ \forall t\in [T-h,T].
\]
Then the couple $(\overline{Y},\overline{Z})\in\mathcal{S}^{\infty}\times BMO$ is the solution to
the following standard quadratic BSDE on $[T-h,T]$:
\[
\overline{Y}_t=\xi+\int^T_t f(s,Y_s,\overline{Z}_s)ds-\int^T_t \overline{Z}_s dB_s.
\]
Similarly to (\ref{my9}), we obtain that for each $t\in [T-h, T]$,
 \begin{align*}
\|Y_t\|_{\mathcal{L}^{\infty}} \leq \|\overline{Y}_t\|_{\mathcal{L}^{\infty}} + \sup_{t\leq s\leq T}L_s(X_s),
  \end{align*}
where
$$
 X_t := \mathbb{E}_t\left[\xi+\int^T_t f(s,Y_s,Z_s)ds\right].
$$
By the Assumption $(H_f^{\prime})$, we have 
 \begin{align}\label{my63}
 \|Y_t\|_{\mathcal{L}^{\infty}}& \leq \|\overline{Y}_t\|_{\mathcal{L}^{\infty}}
 +(C+1)L+ C\mathbb{E}\left[\int^T_t \left(|f(r,Y_r, 0)|+\frac{1}{2}\lambda+\frac{3}{2}|Z_r|^2\right)dr\right]\notag\\
 &\leq   \|\overline{Y}_t\|_{\mathcal{L}^{\infty}} +(C+1)L+C\left(L+\frac{1}{2}\lambda\right)h+\frac{3}{2}C\|Z\|_{BMO_{[T-h, T]}}^2.
 \end{align}
We  recall Proposition 2.2 in \cite{BH1} to have
\begin{align}\label{my62}
|\overline{Y}_t|\leq L(T+1)e^{\lambda T}:=\overline{L}^1, \ \forall t\in[T-h,T].
\end{align}
Recalling again Proposition 2.1 in \cite{BH1}, we have
\begin{align}\label{my61}
\|\overline{Z}\|_{BMO_{[T-h, T]}}^2\leq \frac{1\vee T}{3}\left(1+\frac{4L}{\lambda}+2\overline{L}^1\right)e^{3\lambda \overline{L}^1}:=\overline{L}^2.
\end{align}
We derive from the inequalities \eqref{my63}, \eqref{my62} and \eqref{my61}  that for each $t\in [T-h, T]$,
\[
\|Y_t\|_{\mathcal{L}^{\infty}}\leq \overline{L}^1+(C+1)L+C\left(L+\frac{1}{2}\lambda\right)T+\frac{3}{2}C\overline{L}^2:=\overline{L},
\]
which is the desired result.
\end{proof}


\medskip
Now we are ready to prove Theorem \ref{my202}.

\begin{proof}[Proof of Theorem \ref{my202}] We treat with the existence and the uniqueness
 separately.

{\it Step 1. Existence.}
Define $\overline{A}_0=2(C+2)\overline{L}+\frac{1}{2}C(\lambda+{4\overline{L}}+4\overline{L}\lambda)e^{6\lambda \overline{L}}$. Then by Theorem \ref{my16},
there exists some constant $\overline{h}>0$ depending only on $\overline{L}, \lambda$ and $C$ together with $\overline{A}_0$ such that the quadratic BSDE \eqref{my1} with mean reflection admits a unique deterministic flat  solution
$(Y^1,Z^1,K^1)\in \mathcal{S}^{\infty}_{[T-\overline{h},T]}\times BMO_{[T-\overline{h},T]}\times\mathcal{A}^D_{[T-\overline{h},T]}$ on the  time interval $[T-\overline{h},T]$.  Furthermore, it follows from Lemma \ref{my20} that $\|Y^1\|_{\mathcal{S}^{\infty}_{[T-\overline{h},T]}}\leq \overline{L}$.

Next we take  $T -\overline{ h}$ as the terminal time and
apply Theorem \ref{my16} again to find the unique deterministic flat  solution of the BSDE  \eqref{my1} with mean reflection
$(Y^2,Z^2,K^2)\in \mathcal{S}^{\infty}_{[T-2\overline{h},T-\overline{h}]}\times BMO_{[T-2\overline{h},T-\overline{h}]}\times\mathcal{A}^D_{[T-2\overline{h},T-\overline{h}]}$ on the
time interval $[T-2\overline{h},T-\overline{h}]$.
Let us set
\[
{Y}_t=\sum\limits_{i=1}^2Y^i_t\mathbf{1}_{[T-i\overline{h},T-(i-1)\overline{h})}+Y^1_T\mathbf{1}_{\{T\}}, \  {Z}_t=\sum\limits_{i=1}^2Z^i_t\mathbf{1}_{[T-i\overline{h},T-(i-1)\overline{h})}+
Z^1_T\mathbf{1}_{\{T\}}
\]
on  $[T-2\overline{h},T]$ and  ${K}_t=K^2_t$ on $[T-2\overline{h},T-\overline{h})$, ${K}_t=K^2_{T-\overline{h}}+K^1_t$ on $[T-\overline{h},T]$. One can easily check that
$( {Y}, {Z}, {K})\in\mathcal{S}^{\infty}_{[T-2\overline{h},T]}\times BMO_{[T-2\overline{h},T]}\times\mathcal{A}^D_{[T-2\overline{h},T]}$ is a deterministic flat
solution to BSDE \eqref{my1} with mean reflection. By Lemma \ref{my20} again, it yields
 that $\| {Y}\|_{\mathcal{S}^{\infty}_{[T-2\overline{h},T]}}\leq \overline{L}$.

Furthermore, we repeat this procedure so that we can build a deterministic flat
solution  $(Y,Z,K)\in\mathcal{S}^{\infty}\times BMO\times\mathcal{A}^D $ to the quadratic BSDE \eqref{my1} with mean reflection on $[0,T]$. Moreover, it follows from Lemma \ref{my20}
that  $\| {Y}\|_{\mathcal{S}^{\infty}}\leq \overline{L}$.

{\it Step 2. Uniqueness.}  The uniqueness of the global solution on $[0, T]$ is inherited from the uniqueness of local solution on each time interval. Indeed, for each global solution $(Y, Z, K)$
to the quadratic BSDE \eqref{my1} with mean reflection,
it is easy to check that $({Y}\mathbf{1}_{[T-\widehat{h}, T-\widetilde{h}]}, {Z}\mathbf{1}_{[T-\widehat{h}, T-\widetilde{h}]},({K}_\cdot-{K}_{T-\widehat{h}})\mathbf{1}_{[T-\widehat{h}, T-\widetilde{h}]})$
defines a deterministic flat
solution to the BSDE \eqref{my1} with mean reflection associated with the terminal value $Y_{T-\widetilde{h}}$ on  the time interval $[T-\widehat{h}, T-\widetilde{h}]$, where $0\leq \widetilde{h}\leq \widehat{h}\leq T$.
The proof is complete.
\end{proof}

\begin{remark}{\upshape
Since the component $K$ is a deterministic process, by a truncation argument and the
 Malliavin calculus technique, we can find a uniform bound for $Z$ when the corresponding Malliavin derivatives are bounded, similar to Cheridito and Nam \cite{CN1}. We state the result in the Appendix and leave the proof to interested readers. We remark that under the boundedness assumption of the corresponding Malliavin derivatives, the boundedness of the terminal condition is not necessary.
 }
\end{remark}

 \noindent \textbf{Acknowledgement}:  Y. Hu's research is partially  supported
by Lebesgue Center of Mathematics ``Investissements d'avenir"
Program (No. ANR-11-LABX-0020-01), by ANR CAESARS (No. ANR-15-CE05-0024) and by ANR MFG (No. ANR-16-CE40-0015-01). Y. Lin's research is
partially  supported by  the European Research Council under grant 321111.
 P. Luo's research is partially supported by National Science Foundation of China ``Research Fund for International Young Scientists''(No. 11550110184) and by National Natural Science
Foundation of China (No. 11671257).
F. Wang's research is partially  supported by the National Natural Science Foundation of China  (No.11601282 and 11526205), by the Shandong Provincial Natural Science Foundation (No. ZR2016AQ10) and by the China
Scholarship Council (No. 201606225002).
\appendix
\renewcommand\thesection{Appendix}
\section{ }
\renewcommand\thesection{A}

Let us recall usual notations about Malliavin calculus, which can be found in  \cite{N} and \cite{CN1}.
We denote by $D_t \xi$, $0\leq t\leq T$ the  Malliavin derivative of a Malliavin differentiable random variable $\xi$, by $\mathcal{D}^{1,2}$ the completion of the class of $\mathbb{R}$-valued smooth random variables $\xi$  with respect to the norm \[
\|\xi\|_{\mathcal{D}^{1,2}} = \left|\mathbb{E}\left[\xi^2 + \int_0^T |D_t \xi|^2 dt\right]\right|^{1/2}, \]
by  $\mathcal{H}^{p}$, $p\geq 1$  the space of all  progressively measurable  processes $Y$ taking values in $\mathbb{R}$ such that
\begin{align*}
\|Y\|_{\mathcal{H}^{p}}=\left|\mathbb{E}\left[\left|\int^T_0|Y_s|^2ds\right|^{\frac{p}{2}}\right]\right|^{\frac{1}{p}}<\infty,
\end{align*}
and by $\mathcal{L}^{1,2}_a$ the space of all processes $Y\in\mathcal{H}^2$ such that $Y_t\in\mathcal{D}^{1,2}$ for each $t\in[0,T]$,
 the
process $DY_t$ admits a square integrable progressively measurable version and
\[
\|Y\|_{\mathcal{L}^{1,2}_a}=\|Y\|_{\mathcal{H}^{2}}+\mathbb{E}\left[\int^T_0\int^T_0 |D_rY_t|^2drdt\right]<\infty.
\]
Then we consider the following assumptions on the parameters:
\begin{itemize}
 \item[($\widetilde{H}_\xi$)] The terminal condition $\xi\in \mathcal{D}^{1,2}$ satisfies $|D_t\xi|\leq L$, $dt\otimes d\mathbb{P}$-a.e. for all $t\in[0,T]$ and
  $\mathbb{E}[\ell(T,\xi)]\geq 0$,
 \item[($\widetilde{H}_f$)]  For each pair $(y,z) \in \mathbb{R} \times \mathbb{R}^d$ with
$$
|z| \le Q := \sqrt{d} \left(Le^{\lambda T} + \int_0^T q_t e^{\lambda t} dt\right),
$$
it holds that
\begin{enumerate}
 \item $f(\cdot,y,z) \in \mathcal{L}^{1,2}_a$ and $|D_r f(t,y,z)| \leq q_t$, $dr \otimes d\mathbb{P} \text{-a.e.}$ for all $t\in[0,T]$ and some Borel-measurable function $q: [0,T]\rightarrow [0, \infty)$   satisfying $\int_{0}^{T} |q_t|^2 dt <\infty$,
 \item for every $r \in[0,T]$, and for all $y, p\in \mathbb{R}$,
$$
|D_r f(t,y,z) - D_r f(t,p,q)| \le K_{t}^r(|y-p|+|z-q|), \ \forall |z|,|q|\leq Q,
$$
for all $t \in [0,T]$ and some non-negative process $K_{\cdot}^r$ in $\mathcal{H}^4$.
\end{enumerate}
\end{itemize}
\begin{theorem}
Assume $(\widetilde{H}_\xi)-(H_f)-(\widetilde{H}_f)-(H_{\ell})-(H_L)$ hold.
Then  quadratic BSDE \eqref{my1} with mean reflection has a unique deterministic flat solution $(Y,Z,K)$ such that $Y$ is a continuous adapted process satisfying $\mathbb{E}[\sup\limits_{s\in[0,T]}|Y_s|^2]<\infty$, $Z$ is a bounded progressively measurable process
and $K\in\mathcal{A}^D$. Moreover,  it holds that
\[
|Z_t| \leq \sqrt{d} \left(Le^{\lambda(T-t)}+ \int_t^T q_s e^{-\lambda(t-s)} ds \right) , \  dt \otimes d\mathbb{P}\text{-a.e.}
\]
\end{theorem}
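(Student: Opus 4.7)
The plan is a Malliavin calculus argument in the spirit of Cheridito--Nam \cite{CN1}. The structural feature to exploit is that the reflecting process $K$ is deterministic, so upon writing $\overline{Y}_t := Y_t - (K_T - K_t)$ the pair $(\overline{Y}, Z)$ solves a standard BSDE to which classical Malliavin theory applies directly; in particular one expects $Z_t = D_t \overline{Y}_t = D_t Y_t$ a.e., so an a priori bound on $Z$ reduces to an estimate of $D_r Y_t$ at $r=t$. Since the generator is quadratic, I would first truncate it to get a Lipschitz problem, run the Malliavin analysis there, and then verify that the resulting bound falls below the truncation threshold.

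First I would introduce a truncation: choose a smooth $\rho_N : \mathbb{R}^d \to \mathbb{R}^d$ with $\rho_N(z) = z$ for $|z| \le N$ and $|\rho_N(z)| \le N+1$, set $f^N(t,y,z) := f(t, y, \rho_N(z))$, and note that $f^N$ is globally Lipschitz in $(y,z)$ with $|f^N(t,0,0)| \le L$. The Lipschitz theory of mean-reflected BSDEs from \cite{BH}, under $(H_L)$, then provides a unique deterministic flat solution $(Y^N, Z^N, K^N)$ on $[0,T]$ satisfying $\mathbb{E}[\sup_t |Y^N_t|^2] + \|Z^N\|_{BMO}^2 < \infty$. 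Next, applying classical BSDE Malliavin theory (cf.\ \cite{N}) to the standard Lipschitz BSDE satisfied by $(\overline{Y}^N, Z^N)$, together with $\xi \in \mathcal{D}^{1,2}$ from $(\widetilde{H}_\xi)$ and $(\widetilde{H}_f)$, I would obtain that $D_r Y^N_t$ exists and satisfies, for $r \le t$, the linear BSDE
\begin{equation*}
D_r Y^N_t = D_r \xi + \int_t^T \bigl(\alpha_s D_r Y^N_s + \beta_s \cdot D_r Z^N_s + D_r f^N(s, Y^N_s, Z^N_s)\bigr)\, ds - \int_t^T D_r Z^N_s\, dB_s,
\end{equation*}
with $|\alpha_s| \le \lambda$, $|\beta_s| \le \lambda(1+2|Z^N_s|)$ coming from $(H_f)$ (composition with $\rho_N$ only shrinks the Jacobian), and $|D_r f^N(s,\cdot,\cdot)| \le q_r$ (the truncation does not affect this bound since $D_r z = 0$).

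Since $Z^N \in BMO$ in the Lipschitz truncated problem, $\beta$ is BMO, so Girsanov with $d\widetilde{\mathbb{P}} = \mathscr{Exp}(\beta \cdot B)_0^T d\mathbb{P}$ removes the $D_r Z^N$-drift, and taking conditional expectation plus a Grönwall-type step produces the componentwise bound $|D_r Y^N_t| \le L e^{\lambda(T-t)} + \int_t^T q_s e^{\lambda(s-t)}\, ds$. Specializing $r=t$ and collecting the $d$ components yields
\begin{equation*}
|Z^N_t| \le \sqrt{d}\left(L e^{\lambda(T-t)} + \int_t^T q_s e^{\lambda(s-t)}\, ds\right) \le Q, \quad dt \otimes d\mathbb{P}\text{-a.e.}
\end{equation*}
Choosing $N > Q$ from the outset, the truncation is never activated, so $(Y^N, Z^N, K^N)$ is a deterministic flat solution of the original equation \eqref{my1} with the announced $Z$-bound; continuity of $Y$ and $\mathbb{E}[\sup|Y|^2] < \infty$ are inherited from the Lipschitz theory. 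Uniqueness follows by noting that any deterministic flat solution in the stated class has bounded $Z$, hence solves the truncated BSDE for all $N$ larger than that bound, and must coincide with $(Y^N, Z^N, K^N)$ by uniqueness in the Lipschitz mean-reflected setting of \cite{BH}. The hard part will be to justify the Malliavin differentiability of the solution to the mean-reflected BSDE: one must verify that the deterministic flat Skorohod structure is compatible with the operator $D_r$ (which essentially comes for free since $K^N$ is deterministic and $D_r K^N = 0$), and to check that the bound on $D_r f^N$ is genuinely independent of $N$, so that the final $Z$-estimate survives removing the truncation.
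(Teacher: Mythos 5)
You have correctly identified the intended architecture: the paper gives no written proof of this theorem (it is stated in the Appendix with the proof explicitly ``left to interested readers''), and the preceding Remark describes exactly your route --- a truncation argument plus the Malliavin calculus technique of Cheridito and Nam \cite{CN1}, exploiting that $K$ is deterministic, so that $\overline{Y}_t := Y_t-(K_T-K_t)$ together with $Z$ solves a standard BSDE with the deterministically shifted generator $(s,\overline{y},z)\mapsto f(s,\overline{y}+K_T-K_s,z)$, $D_rK=0$, and $Z_t=D_tY_t$. There is, however, one concrete flaw in your execution: the truncation level. You take $\rho_N(z)=z$ for $|z|\leq N$, $|\rho_N(z)|\leq N+1$, and then choose $N>Q$. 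But $(\widetilde{H}_f)$ only provides $f(\cdot,y,z)\in\mathcal{L}^{1,2}_a$, the bound $|D_rf(t,y,z)|\leq q_t$, and the Lipschitz control on $D_rf$ for $|z|\leq Q$; for $Q<|\rho_N(z)|\leq N+1$ nothing at all is assumed, so the Malliavin differentiability of $f^N(t,y,z)=f(t,y,\rho_N(z))$ and the key bound on $D_rf^N$ --- which you yourself flag as needing to be ``genuinely independent of $N$'' but leave unresolved --- are simply unavailable, and cannot be recovered from the stated hypotheses. The step would fail as written. The fix is standard and is what Cheridito--Nam do: truncate exactly at radius $Q$, e.g.\ $\rho(z)=z\min(1,Q/|z|)$, which is $1$-Lipschitz with range in the closed ball of radius $Q$, so all of $(\widetilde{H}_f)$ applies along the truncated generator; the a priori estimate $|Z_t|\leq \sqrt{d}\,(Le^{\lambda(T-t)}+\int_t^T q_se^{\lambda(s-t)}ds)\leq Q$ then shows the truncation is never active, closing the loop with no parameter $N$ at all.

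Two smaller corrections. First, you claim the Lipschitz mean-reflected theory of \cite{BH} yields $\|Z^N\|_{BMO}<\infty$; it does not --- under $(\widetilde{H}_\xi)$ the terminal value is \emph{not} assumed bounded (only its Malliavin derivative is; this is precisely the point of the paper's Remark), and \cite{BH} gives only square integrability of $Z^N$. Fortunately your Girsanov step does not need BMO: after truncation at $Q$ the linearization coefficient satisfies $|\beta_s|\leq\lambda(1+2Q)$, since the $z$-arguments of $f$ are capped at $Q$, so $\beta$ is bounded and the measure change is immediate. Second, the bound on the partial Malliavin derivative should be $|D_rf^N(s,\cdot,\cdot)|\leq q_s$, indexed by the generator's time argument as in $(\widetilde{H}_f)$, not $q_r$; your Gronwall computation already uses the correct $\int_t^T q_s e^{\lambda(s-t)}ds$, matching the claimed estimate since $e^{-\lambda(t-s)}=e^{\lambda(s-t)}$ for $s\geq t$. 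With these repairs the argument goes through, including your uniqueness step: any deterministic flat solution in the stated class has bounded $Z$, hence solves the equation with generator truncated above that bound, and coincides with the constructed solution by uniqueness in the Lipschitz mean-reflected setting.
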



\begin{thebibliography}{99} \small

\bibitem{A}  Ankirchner, S.,   Imkeller, P.  and  dos Reis, G.
 Classical and variational differentiability of BSDEs with quadratic growth, Electron. J. Probab, 12(53), 1418-1453, 2007.

\bibitem{BE0}  Barrieu, P. and  El Karoui, N.  Monotone stability of quadratic semimartingales with applications to unbounded general quadratic BSDEs,
Ann. Probab., 41(3B), 1831-1863, 2013.

\bibitem{BE} Bouchard, B., Elie, R. and R\'{e}veillac, A. BSDEs with weak terminal condition, Ann. Probab., 43(2), 572-604, 2015.

\bibitem{BC} Briand, P., Chaudru de Raynal, P.E.,  Guillin, A. and  Labart, C. Particles Systems and Numerical Schemes for Mean Reflected Stochastic Differential Equations,
arxiv:1612.06886, 2016.

\bibitem{BC}  Briand, P. and  Confortola, F. BSDEs with stochastic Lipschitz condition and quadratic PDEs in Hilbert spaces, Stochastic Process. Appl.,
118(5), 818-838, 2008.

\bibitem{BH} Briand, P., Elie, R. and Hu, Y.   BSDEs with mean reflection, Ann. Appl. Probab., in press,
 	arXiv:1605.06301, 2016.


\bibitem{BH1} Briand, P. and Elie, R. A simple constructive approach to quadratic BSDEs with or without delay,
Stochastic Process. Appl., 123(8), 2921-2939, 2013.

\bibitem{BH2}  Briand, P. and Hu, Y. BSDE with quadratic growth and unbounded terminal
value, Probab. Theory Related Fields, 136(4), 604-618, 2006.

\bibitem{BH3}  Briand, P. and Hu, Y.  Quadratic BSDEs with convex generators and unbounded terminal conditions,
Probab. Theory Related Fields,  141(3-4), 543-567, 2008.

\bibitem{BH4} Buckdahn R. and Hu, Y.  Pricing of American contingent claims with jump stock price
and constrained portfolios, Math. Oper. Res., 23(1), 177-203, 1998.

\bibitem{BH5} Buckdahn R. and Hu, Y.  Hedging contingent claims for a large investor in an incomplete
market, Adv. in Appl. Probab., 30(1), 239-255, 1998.

\bibitem{CE} Chassagneux, J. F., Elie, R. and Kharroubi, I. A note on existence and uniqueness
for solutions of multidimensional reflected BSDEs, Electron. Commun. Probab.,
16, 120-128, 2011.

\bibitem{CN1} Cheridito, P. and Nam, K. BSDEs with terminal conditions that have bounded Malliavin derivative, J.  Funct. Anal., 266(3),  1257-1285, 2014.

\bibitem{CN} Cheridito, P. and Nam, K.  Multidimensional quadratic and subquadratic BSDEs with special structure,
Stochastics, 87(5), 871-884, 2015.

\bibitem{CK1} Cvitani\'{c}, J. and Karatzas, I.  Backward stochastic differential equations with reflection
and Dynkin games, Ann. Probab., 24(4), 2024-2056, 1996.

\bibitem{CK} Cvitani\'{c}, J., Karatzas, I. and Soner, H.M.  Backward stochastic differential equations
with constraints on the gains-process, Ann. Probab., 26(4), 1522-1551, 1998.


\bibitem{EKP} El Karoui, N., Kapoudjian, C., Pardoux, E., Peng, S. and Quenez, M. C.
Reflected solutions of backward SDE's, and related obstacle problems for PDE's, Ann. Probab., 25(2),
702-737, 1997.


\bibitem{EKP2}El Karoui, N., Pardoux, E. and Quenez, M.C.  Reflected backward SDEs and
American options, Numerical methods in finance, 215-231, Cambridge Univ. Press, Cambridge, 1997.


\bibitem{EKP1} El Karoui, N., Peng, S. and Quenez, M. C. Backward stochastic differential equations in
finance, Math. Finance, 7(1), 1-71, 1997.

\bibitem{FR}  Frei, C. and dos Reis, G. A financial market with interatcting investors: does an equilibrium exist?, Math. Financ. Econ., 4(3), 161-182, 2011.

\bibitem{HJ} Hamadene, S. and Jeanblanc, M. On the starting and stopping problem: application
in reversible investments, Math. Oper. Res., 32(1), 182-192, 2007.

\bibitem{HZ} Hamadene, S. and Zhang, J. Switching problem and related system of reflected backward SDEs, Stochastic Process. Appl.,
120(4), 403-426, 2010.

\bibitem{HR2016} Harter, J. and  Richou, A. A stability approach for solving multidimensional quadratic BSDEs,  arXiv:1606.08627, 2016.

\bibitem{HI1} Hu, Y.,  Imkeller, P.  and  M\"{u}ller, M.  Utility maximization in incomplete markets, Ann. Appl. Probab., 15(3), 1691-1712, 2005.

\bibitem{HT1} Hu, Y. and Tang, S. Multi-dimensional BSDE with oblique reflection and optimal switching,
Probab. Theory Related Fields, 147(1-2), 89-121, 2010.


\bibitem{HT} Hu, Y. and Tang, S.  Multi-dimensional backward stochastic differential equations of diagonally quadratic generators,
Stochastic Process. Appl., 126(4), 1066-1086, 2016.

\bibitem{M} Morlais, M.-A.  Quadratic BSDEs driven by a continuous martingale
and applications to the utility
maximization problem, Finance Stoch., 13(1), 121-150, 2009.

\bibitem{N} Nualart, D. The Malliavin Calculus and Related Topics. 2nd ed. Springer, Berlin, 2006.

\bibitem{K} Kazamaki, N.  Continuous exponential martingales and BMO, Lecture Notes in
Mathematics, 1579. Springer-Verlag, Berlin, 1994.

\bibitem{K1}Kobylanski, M. Backward stochastic differential equations and partial differential
equations with quadratic growth, Ann. Probab., 28(2), 558-602, 2000.

\bibitem{PP} Pardoux, E. and Peng, S.  Adapted solution of a backward stochastic differential equation,
Systems Control Lett., 14(1), 55-61, 1990.

\bibitem{PX} Peng, S. and Xu, M. Reflected BSDE with a constraint and its applications in an incomplete market,
Bernoulli,
16(3), 614-640, 2010.

\bibitem{Te} Tevzadze, R. Solvability of backward stochastic differential equations with quadratic
growth, Stochastic Process. Appl., 118(3), 503-515, 2008.

\bibitem{XZ2016} Xing, H. and   Zitkovic, G. A class of globally solvable Markovian quadratic BSDE systems and applications,  Ann. Probab., in press,  	arXiv:1603.00217, 2016.

\end{thebibliography}
\end{document}